\documentclass[11pt]{amsart}
\usepackage{fullpage}
\usepackage{color}
\usepackage{graphicx,psfrag} %,epstopdf}
\usepackage{color} %This loads the color package
\usepackage{tikz}
\usepackage{pgffor}
\usepackage{todonotes}
\usepackage{eqnarray,amsmath}

\usepackage{perpage}     %the perpage package

\usepackage{wrapfig}
\usepackage{enumerate}
\usepackage{subfigure}
\usepackage{caption}

\usepackage[normalem]{ulem}
\usepackage[makeroom]{cancel} %%%%cross-out in math mode
\usepackage{amssymb} %\nmid

\MakePerPage{footnote}   %the perpage package command

\newtheorem{theorem}{Theorem}[section]
\newtheorem{lemma}[theorem]{Lemma}
\newtheorem{corollary}[theorem]{Corollary}
\newtheorem{proposition}[theorem]{Proposition}
\newtheorem{conjecture}[theorem]{Conjecture}
\theoremstyle{definition}
\newtheorem{definition}[theorem]{Definition}

\theoremstyle{remark}
\newtheorem{remark}[theorem]{Remark}

\newtheorem{property}[theorem]{Property}
\newtheorem{question}[theorem]{Question}

\newtheorem{ex-prop}[theorem]{Example-Proposition}

\newtheorem{computation}[theorem]{Computation}

\numberwithin{equation}{section}

\definecolor{gray}{rgb}{.5,.5,.5}

\definecolor{black}{rgb}{0,0,0}

\definecolor{blue}{rgb}{0,0,1}

\definecolor{red}{rgb}{1,0,0}

\definecolor{green}{rgb}{0,1,0}

\definecolor{yellow}{rgb}{1,1,.4}

\definecolor{purple}{rgb}{1,0,1}

\definecolor{gold}{rgb}{.5,.5,.2}

\definecolor{darkgreen}{rgb}{0,.5,0}

\definecolor{greenbean}{RGB}{199, 237, 204}

\definecolor{RED}{rgb}{1,0,0}

\DeclareMathOperator{\Ext}{Ext}
\DeclareMathOperator{\Int}{Int}

\begin{document}

\title{Random knots using Chebyshev billiard table diagrams}

\author{Moshe Cohen}
\address{Department of Electrical Engineering, Technion -- Israel Institute of Technology, Haifa 32000, Israel}
\email{mcohen@tx.technion.ac.il}
\thanks{The first author was supported in part by the funding from the European Research Council under the European Union's Seventh Framework Programme, Grant FP7-ICT-318493-STREP}

\author{Sunder Ram Krishnan}
\address{Department of Electrical Engineering, Technion -- Israel Institute of Technology, Haifa 32000, Israel}
\email{eeksunderram@gmail.com}
\thanks{The second author was supported in part by the funding from the European Research Council under Understanding Random Systems via Algebraic Topology (URSAT), ERC Advanced Grant 320422.}

\begin{abstract}
We use the Chebyshev knot diagram model of Koseleff and Pecker in order to introduce a random knot diagram model by assigning the crossings to be positive or negative uniformly at random.  We give a formula for the probability of choosing a knot at random among all knots with bridge index at most 2. Restricted to this class, we define internal and external reduction moves that decrease the number of crossings of the diagram. We make calculations based on our formula, showing the numerics in graphs and providing evidence for our conjecture that the probability of any knot $K$ appearing in this model decays to zero as the number of crossings goes to infinity.
\end{abstract}

\keywords{two-bridge}
\subjclass[2000]{57M25, 57M27; 05C80; 60C05; 60B99}

\maketitle

\renewcommand{\thesubfigure}{}
\begin{figure}[h]
\centering
\subfigure{%[A.    $T(3,7)$.]{
\begin{tikzpicture}[scale=.75]
    \foreach \i in {0,...,7} {
        \draw [very thin,dashed,lightgray] (\i,1) -- (\i,4);%  node [below] at (\i,\yMin) {$\i$};
    }
    \foreach \i in {1,...,4} {
        \draw [very thin,dashed,lightgray] (0,\i) -- (7,\i);% node [left] at (\xMin,\i) {$\i$};
    }
\draw[thick](-.25,.75) -- (3,4) -- (6,1) -- (7,2) -- (5,4) -- (2,1) -- (0,3) -- (1,4) -- (4,1) -- (7,4) -- (7.25,4.25);
\draw[color=white](.5,.5) -- (.75,.75);
\end{tikzpicture}
}
\hspace{2em}
\subfigure{
		\includegraphics[width=4cm]{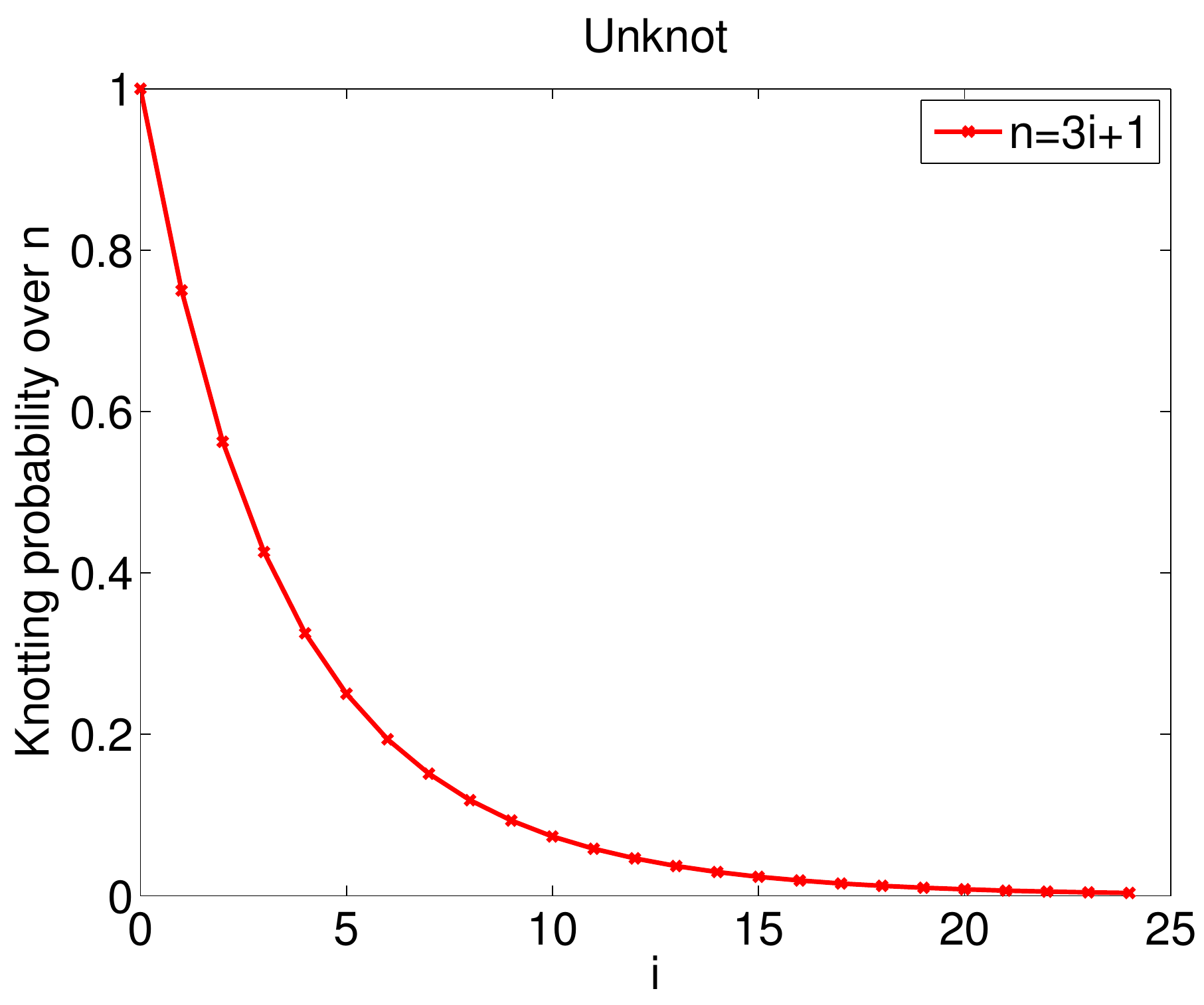}
}
\end{figure}

\section{Introduction}

It is still unknown whether the Jones polynomial detects the unknot \cite[Problem 1]{Jon:unknot}, that is, whether the Jones polynomial of some non-trivial knot is also equal to 1, the Jones polynomial of the unknot.  One might believe this to be true because Khovanov homology, a knot invariant of bigraded abelian groups whose graded Euler characteristic gives the Jones polynomial, has been shown by Kronheimer and Mrowka to detect the unknot \cite{KroMro}.  Dasbach and Hougardy \cite{DasHou} found no counterexample in all knots up to seventeen crossings, including over two million prime knots, and this was extended to eighteen crossings by Yamada \cite{Yam}.  Rolfsen with Anstee and Przytycki \cite{AnPrRo} and with Jones \cite{JonRol} sought a counterexample via mutations.  Bigelow \cite{Big} outlined an algorithm to find a counterexample among four-strand braids.

We propose using the probabilistic method, pioneered by Erd\"os in the 1940's, as a tool to investigate data surrounding this question.  This method has been successful in studying ``typical" large ``random'' graphs, like the Erd\"os-R\'enyi model, and in showing that certain graph theoretic properties hold with certain probabilities for these large graphs.  However, it has been only minimally exploited in topology in general, with some notable exceptions:  the study of random 3-manifolds by Dunfield with W. Thurston \cite{DT:fin} and D. Thurston \cite{DT:tun}; random walks on the mapping class group in a recent work by Ito \cite{Ito}; and the Linial-Meshulam model \cite{LinMes} for random simplicial 2-complexes and its recent generalization by Costa and Farber \cite{CosFar} to $k$-complexes.  Also see survey papers on the topology of random complexes by Kahle \cite{Kah} and Bobrowski and Kahle \cite{BobKah}.

Ultimately, we would like to compare the probability of obtaining the unknot with that of obtaining unit Jones polynomial, but for this we need a useful notion of a space of knots.  Various models for random knots from the literature, appearing more frequently since an AMS Special Session in Vancouver in 1993, generally take one of two forms:  the physical knotting of a random walk, as in \cite{Buck:G, DegTsur:polymer, DSDS:op, DSDS:or, Alias, ABDHKS}, or a random walk on the Cayley graph of the braid group, as in \cite{NecGrosVers, Voit, NecVoit, MairMath}.

Unfortunately, these models do not make common knot theoretic computations accessible.  For this reason, we are interested in constructing a random model based on a particular knot diagram.  The first paper on this topic appeared in 2014 by Even-Zohar, Hass, Linial, and Nowik \cite{EZHLN} using the \"ubercrossing and petal projections of Adams et al. \cite{Adams:petal}.  Forthcoming work of Dunfield, Obeidin, et al. \cite{Dun:knots} is expected to be on the topic of random knot diagrams as well.

In this paper, we apply the probabilistic method to a model developed by Koseleff and Pecker \cite{KosPec, KosPec4, KosPec3} that uses Chebyshev polynomials of the first kind to parametrize a knotted curve.  These so-called \emph{Chebyshev knots} are analogues of the more established Lissajous knots studied by Jones and Przytycki \cite{JonPrz}.  However, Koseleff and Pecker show in \cite{KosPec3} that all knots are Chebyshev, which is not the case for Lissajous knots (see for example \cite{BDHZ}).

The goal of this paper is to establish a framework that could be used in future work to find evidence surrounding the question stated above.  However, the present scope will be restricted to the class of knots whose bridge index is at most 2, and it is already known that the Jones polynomial detects the unknot among this class.   This is because 2-bridge knots are alternating \cite{Goo} and Jones polynomials of alternating knots have spans (or degrees, after some normalization) equal to four times the crossing number of the alternating diagram.  This achieves the minimal crossing number by Tait's Conjecture, proven by Kauffman \cite{KauffmanBracket, Kauff:span}, Murasugi \cite{Murasugi}, and Thistlethwaite \cite{Th}.

One reason in particular why this model is of interest for the question above is that the Jones polynomial follows some nice recursions, as shown with formulas appearing in recent work by the first author \cite{Co:3bridge}.

\medskip

\textbf{Results. }  Let $T(3,n+1)$ be the (crossingless) billiard table diagram (as in Figure \ref{fig:Cheb}) of a Chebyshev knot with bridge index at most 2 for $3\nmid (n+1)$.  Choose uniformly at random an element of $\{+,-\}^n$ to assign positive and negative crossings to the 4-valent vertices of the billiard table, ordered from left to right.

In the result below, we use $x^{(i)}_i=x^{(i)}_i(K)$ that depend on a fixed knot $K$, as in Definition \ref{def:xy}.

\smallskip

\noindent
\textbf{Main Theorem \ref{thm:main} (1).  }  
\textit{The probability of a given knot $K$ appearing in $T(3,n+1)$ is}
\begin{equation*}
\frac{1}{2^n}\left[x_{n+3}^{(n+3)}+\sum_{i=0}^{n-6} \left[\binom{n-5}{i+1}+4\binom{n-5}{i}\right] x_{n-3i}^{(n-3i)} + 4x_{-2n+15}^{(-2n+15)}\right].
\end{equation*}

The second and third parts of this Main Theorem \ref{thm:main} give formulas for the variables $x_i^{(i)}$ when $i$ is positive and nonpositive, respectively.

\medskip

\textbf{Organization. }  Section \ref{sec:knots} starts with a brief background on knots, bridge index, and continued fractions before introducing the relatively new model for Chebyshev knots in Subsection \ref{subsec:Cheb}.  Then in Subsection \ref{subsec:a2}, Definitions \ref{def:int} and \ref{def:ext} introduce the \emph{internal and external reduction moves} that will play a major role in the paper. Theorem \ref{thm:noMoves} then gives the first probability result describing the probability of a knot occurring without any reduction moves.

Definitions \ref{def:xy} and \ref{def:i0} of $S^{(n)}_i$, $x^{(n)}_i$, and $y^{(n)}_i$ in Section \ref{sec:maindefs} are important.  The three tools Lemma \ref {lem:xs}, Equation \ref{eq:fact}, and the $xy$ Lemma \ref{lem:XY} will be used to prove the main theorem.

The primary result of Main Theorem \ref{thm:main} is the first part given by Equation \ref{eq:maineq}.  The other two parts give formulas for the variables $x_i^{(i)}$ that appear in the first part.  Also appearing in Section \ref{sec:mainresults} are Lemma \ref{lem:binom}, which gives an expression for the $x_i^{(n)}$.

Finally Section \ref{sec:data} gives some numerics for Equation \ref{eq:maineq} in Figure \ref{fig:knotprob}, leading to the important Conjecture \ref{conj:0} that the probability of any knot $K$ given by Equation \ref{eq:maineq} approaches zero as $n$ goes to infinity and Question \ref{ques:0rate} concerning the decay rates of the probabilities.

\medskip

\textbf{Acknowledgements. }  Inspiration for this project arose from three places:  many in-depth conversations of the first author with Tahl Nowik and Nati Linial in order to write together a 2011 grant proposal on the subject of random knot diagrams; conversations of the first author with Pierre-Vincent Koseleff (UPMC/INRIA Ouragan), especially those during his visit to Israel in December 2014; and the insistence of Robert Adler on bringing together topologists and probabilists.

\section{The Chebyshev knot model}
\label{sec:knots}

Those already familiar with knots, bridge index, and continued fraction expansions may skip Subsection \ref{subsec:background}.  Chebyshev knots are treated in general in Subsection \ref{subsec:Cheb}, with the $a=3$ case of particular interest for this paper discussed in Subsection \ref{subsec:a2}.

\subsection{Knots, bridge index, and continued fraction expansions}
\label{subsec:background}

A \emph{knot} $K$ is an embedding of the circle in %three-space, most commonly
 $S^3$, but one can think of the circle embedded in $\mathbb{R}^3$ with the one-point compactification of $\mathbb{R}^3$ away from the circle.  %, as it is here.
  A \emph{long knot} is one where this one-point compactification lies on the circle, thus leaving two ends that go to infinity.

  Reidemeister's Theorem allows one to consider knots by focusing solely on their \emph{knot diagrams}, projections to the plane with only 4-valent vertices that are decorated by ``over-'' and ``under-crossing'' information, up to the three Reidemeister moves.

A \emph{bridge} is one of the arcs in a knot diagram: it consists only of over-crossings.  The \emph{bridge index} $br(K)$ of a knot $K$ is the minimum number over all diagrams of disjoint bridges which together include all over-crossings or equivalently the minimum number over all diagrams of local maxima of the knot diagram taken with a Morse function.

All knots with bridge index 2 can be written in \emph{Conway's normal form} $C(a_1,a_2,\ldots,a_{n-1},a_n)$ \cite{Con} given in Figure \ref{fig:conway}, depending on the parity of $n$.
  Each $a_i$ is a non-zero integer counting the number of crossings in the $i$th twist region; the sign of $a_i$ is determined by both the crossing direction and the position of the crossing, following Figure \ref{fig:crossings}.  
  The class of 2-bridge knots can be completely described by the set of all finite sequences of nonzero integers giving positive or negative twists in alternating twist regions.  Note that the number of crossings in the knot diagram is equal to the sum of the absolute values of the $a_i$.

\begin{figure}[ht]
\centering
\begin{tikzpicture}%[scale=.4]
\draw (1,-1) -- (11,-1);
\draw (1,2) arc (90:270:.5);
\draw (1,0) arc (90:270:.5);
\draw (11,0) arc (-90:90:.5);
\draw (11,-1) arc (-90:90:1.5);
    \foreach \x/\y in {1/0,3/1,7/0,9/1} {
        \draw (\x,\y) -- (\x+.25,\y) -- (\x+.5,\y+.25);%  node [below] at (\i,\yMin) {$\i$};
        \draw (\x,\y+1) -- (\x+.25,\y+1) -- (\x+.5,\y+.75);%  node [below] at (\i,\yMin) {$\i$};
        \draw (\x+1.5,\y+.25) -- (\x+1.75,\y) -- (\x+2,\y);%  node [below] at (\i,\yMin) {$\i$};
        \draw (\x+1.5,\y+.75) -- (\x+1.75,\y+1) -- (\x+2,\y+1);%  node [below] at (\i,\yMin) {$\i$};
    }
    \foreach \x/\y in {1/2,3/0,7/2,9/0} {
        \draw (\x,\y) -- (\x+2,\y);
    }
    \foreach \x/\y in {6/2,6/0,6/1} {
        \draw (\x,\y) node {$\ldots$};
    }
\draw (2,.5) node {($a_1$)};
\draw (4,1.5) node {($a_2$)};
\draw (8,.5) node {($a_{n-1}$)};
\draw (10,1.5) node {($a_n$)};

\draw (1,3) -- (11,3);
\draw (1,6) arc (90:270:.5);
\draw (1,4) arc (90:270:.5);
\draw (11,5) arc (-90:90:.5);
\draw (11,3) arc (-90:90:.5);
    \foreach \x/\y in {1/4,3/5,7/5,9/4} {
        \draw (\x,\y) -- (\x+.25,\y) -- (\x+.5,\y+.25);%  node [below] at (\i,\yMin) {$\i$};
        \draw (\x,\y+1) -- (\x+.25,\y+1) -- (\x+.5,\y+.75);%  node [below] at (\i,\yMin) {$\i$};
        \draw (\x+1.5,\y+.25) -- (\x+1.75,\y) -- (\x+2,\y);%  node [below] at (\i,\yMin) {$\i$};
        \draw (\x+1.5,\y+.75) -- (\x+1.75,\y+1) -- (\x+2,\y+1);%  node [below] at (\i,\yMin) {$\i$};
    }
    \foreach \x/\y in {1/6,3/4,7/4,9/6} {
        \draw (\x,\y) -- (\x+2,\y);
    }
    \foreach \x/\y in {6/6,6/4,6/5} {
        \draw (\x,\y) node {$\ldots$};
    }
\draw (2,4.5) node {($a_1$)};
\draw (4,5.5) node {($a_2$)};
\draw (8,5.5) node {($a_{n-1}$)};
\draw (10,4.5) node {($a_n$)};

\end{tikzpicture}
\caption{\label{fig:conway} Conway's normal form $C(a_1,a_2,\ldots,a_{n-1},a_n)$ \cite{Con} for 2-bridge knots, depending on the parity of the number of twist regions.  For space considerations, they are drawn horizontally here, but they are usually depicted vertically to give exactly two each of local maxima and minima.}
\end{figure}
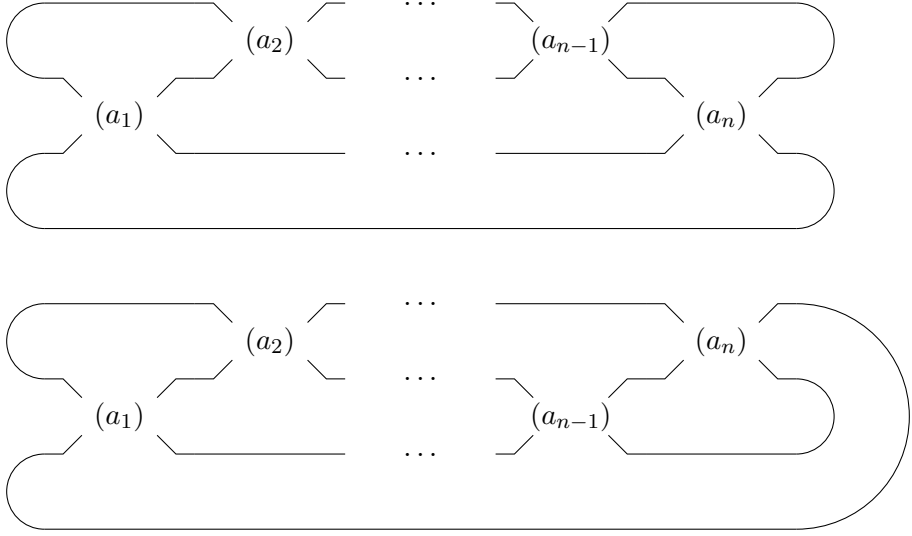

\begin{figure}[ht]
\centering
\begin{tikzpicture}%[scale=.4]

{%[scale=.75]
%POSITIVE IN BRAIDS, NEGATIVE IN MATHEMATICA
\foreach \x/ \y in {5/0, 6/0,8/0}
    {
    \draw (\x,\y) -- (\x+1,\y+1);
    \draw[color=white, line width=10] (\x+1,\y) -- (\x,\y+1);
    \draw (\x+1,\y) -- (\x,\y+1);
    }
\draw (7.5,.5) node {$\dots$};

\draw (4.5,.5) node {,};

%THESE ARE POSITIVE IN MATHEMATICA, NEGATIVE IN BRAIDS:
\foreach \x/ \y in {0/0,1/0,3/0}
    {
    \draw (\x+1,\y) -- (\x,\y+1);
    \draw[color=white, line width=10] (\x,\y) -- (\x+1,\y+1);
    \draw (\x,\y) -- (\x+1,\y+1);
    }
\draw (2.5,.5) node {$\dots$};
}

\draw (-1.25,.75) node {positive $a_{2i+1}$};
\draw (-1.25,.25) node {negative $a_{2i}$};
\draw (10.25,.75) node {positive $a_{2i}$};
\draw (10.25,.25) node {negative $a_{2i+1}$};
\end{tikzpicture}
\caption{\label{fig:crossings} The signs of crossings as determined by the $a_i$ notation of Conway's normal form \cite{Con}.}
\end{figure}
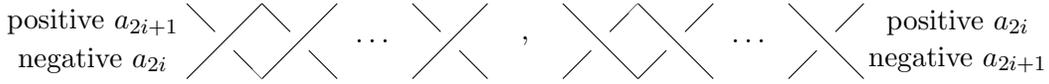

Schubert \cite{Sch} used continued fraction expansions to denote any 2-bridge knot by some rational number:
$$[a_1,a_2,\ldots,a_n]:=a_1+\frac{1}{a_2+\frac{1}{\ldots+\frac{1}{a_{n-1}+\frac{1}{a_n}}}}=\frac{\alpha}{\beta}.$$
Thus they are also called \emph{rational knots}.  The following theorem gives a way to distinguish 2-bridge knots by their rational numbers.

\begin{theorem}
\label{thm:schubert}
\cite{Sch} 
The rational knots $\frac{\alpha}{\beta}$ and $\frac{\alpha'}{\beta'}$ are isotopic if and only if
$\alpha'=\alpha$ and $\beta'\equiv\beta^{\pm1}$ mod $\alpha$.
\end{theorem}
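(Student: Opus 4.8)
The plan is to split the biconditional into a constructive ``if'' direction and an invariant-theoretic ``only if'' direction, passing through the branched double cover of the knot for the latter and through explicit diagram isotopies for the former.

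For the \emph{``only if''} direction I would show that the fraction $\frac{\alpha}{\beta}$ is a complete isotopy invariant by producing a topological invariant that recovers it. The natural tool is the double cover of $S^3$ branched over the knot: for the $2$-bridge knot with Schubert fraction $\frac{\alpha}{\beta}$ this branched double cover is the lens space $L(\alpha,\beta)$. An ambient isotopy of $S^3$ is orientation preserving and lifts, so isotopic knots $\frac{\alpha}{\beta}$ and $\frac{\alpha'}{\beta'}$ yield orientation-preservingly homeomorphic lens spaces $L(\alpha,\beta)\cong L(\alpha',\beta')$. I would then invoke the Reidemeister--Franz classification of lens spaces: $L(\alpha,\beta)$ and $L(\alpha',\beta')$ are orientation-preservingly homeomorphic exactly when $\alpha'=\alpha$ and $\beta'\equiv\beta^{\pm1}\pmod{\alpha}$, which is the stated arithmetic. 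The delicate point is the orientation bookkeeping: the unoriented lens-space classification permits $\beta'\equiv\pm\beta^{\pm1}$, and it is the orientation-preserving refinement, guaranteed by naturality of the branched cover under ambient isotopy, that removes the extra sign and leaves only $\beta^{\pm1}$.

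For the \emph{``if''} direction I would argue constructively, realizing the arithmetic moves by explicit ambient isotopies of the Conway normal form $C(a_1,\dots,a_n)$. Two moves suffice. First, reversing the continued fraction $[a_1,\dots,a_n]\mapsto[a_n,\dots,a_1]$ is visibly an isotopy (a rotation of the diagram), and the classical convergent identity $p_nq_{n-1}-p_{n-1}q_n=\pm1$ shows that it fixes the numerator $\alpha$ while replacing $\beta$ by an inverse modulo $\alpha$, producing the exponent $-1$. Second, the freedom in reducing $\beta$ modulo $\alpha$, namely $\beta\mapsto\beta+\alpha$, corresponds to inserting or absorbing a full twist in the two-bridge presentation, again an isotopy, so that only the residue class of $\beta$ mod $\alpha$ matters. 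Composing these moves shows that any two fractions satisfying the stated relation present isotopic knots.

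The main obstacle is the lens-space classification underlying the ``only if'' direction: deducing $\beta'\equiv\beta^{\pm1}\pmod{\alpha}$ from $L(\alpha,\beta)\cong L(\alpha',\beta')$ is genuinely hard, because homology and the fundamental group do not separate lens spaces and one needs a finer invariant such as Reidemeister torsion together with the linking form. A self-contained alternative that avoids lens spaces is to route both directions through Conway's calculus of rational tangles, proving that a rational tangle is determined up to boundary-fixing isotopy by its tangle fraction and then checking which tangle fractions have isotopic numerator closures; but this mainly relocates the difficulty, since completeness of the tangle fraction is itself the crux and is most cleanly obtained from the same branched-cover argument.
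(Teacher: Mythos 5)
The paper offers no proof of this statement at all: it is imported verbatim from Schubert \cite{Sch} as a known classification, so there is nothing internal to compare your argument against, and your proposal has to be judged as a reconstruction of the classical proof. On that footing it is essentially correct and follows the standard modern route --- which is genuinely different from Schubert's original combinatorial analysis of bridge presentations --- namely the double branched cover for ``only if'' and plat/tangle isotopies for ``if''. Your orientation bookkeeping is the right delicate point: ambient isotopy gives an orientation-preserving homeomorphism $L(\alpha,\beta)\cong L(\alpha',\beta')$, and the Reidemeister--Franz classification then yields exactly $\beta'\equiv\beta^{\pm1}\bmod\alpha$ rather than the unoriented $\pm\beta^{\pm1}$, which would wrongly identify mirrors. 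Two details in your ``if'' direction deserve more care. First, the palindrome move does not always produce the exponent $-1$ on the nose: reversal gives $\beta\beta'\equiv(-1)^{n+1}\bmod\alpha$ (this is precisely Theorem \ref{thm:palindrome} in the paper), so for even-length expansions you get $-\beta^{-1}$; you must first adjust the parity of the expansion (e.g.\ $[a_1,\ldots,a_n]=[a_1,\ldots,a_n-1,1]$ for $a_n>1$), since otherwise your move would ``prove'' every rational knot amphichiral. Second, your twist move is correctly the bottom-twist lemma (vertical composition sends $\frac{\alpha}{\beta}\mapsto\frac{\alpha}{\beta+\alpha}$ and is absorbed by the numerator closure), but invariance of the knot under changing the continued-fraction expansion of a \emph{fixed} fraction still requires the constructive half of Conway's theorem (equal fractions imply isotopic tangles), which you acknowledge only obliquely; it is the easier half and is proved by flype-type moves in \cite{KauffLamb:class}, so the gap is fillable but should be cited or proved, not assumed. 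With those two repairs your outline is a complete and correct proof strategy; each approach buys what you say it does --- the branched-cover route gets the hard rigidity from lens-space torsion invariants, while the tangle-calculus route is more elementary diagrammatically but relocates the rigidity into the completeness of the tangle fraction.
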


In the next Subsection \ref{subsec:Cheb}, symmetry of our model will allow us reverse the order of the twist regions, and so we are interested in the following result as well:

\begin{theorem}
\label{thm:palindrome} (Palindrome Theorem, see for example \cite{KauffLamb:class}) %as written in http://arxiv.org/pdf/math/0311511.pdf 
The rational knots $\frac{\alpha}{\beta}=[a_1,\ldots,a_n]$ and $\frac{\alpha'}{\beta'}=[a_n,\ldots,a_1]$ are isotopic if and only if
$\alpha'=\alpha$ and $\beta\beta'\equiv(-1)^{n+1}$ mod $\alpha$.
\end{theorem}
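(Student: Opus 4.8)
The plan is to translate the statement into linear algebra via the standard matrix encoding of continued fractions, and to exploit the fact that reversing the order of the partial quotients is realized by transposition. First I would set $M_i = \begin{pmatrix} a_i & 1 \\ 1 & 0 \end{pmatrix}$ and form the product
\[
M \;=\; M_1 M_2 \cdots M_n \;=\; \begin{pmatrix} p_n & p_{n-1} \\ q_n & q_{n-1} \end{pmatrix},
\]
where $p_k/q_k$ denote the convergents, so that $[a_1,\dots,a_n] = p_n/q_n = \alpha/\beta$. This fraction is automatically in lowest terms because each $M_i$ has determinant $-1$, whence $\det M = \pm 1$ forces $\gcd(p_n,q_n)=1$; thus $\alpha = p_n$ and $\beta = q_n$.

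Next, the key observation is that each $M_i$ is symmetric, so transposing the full product reverses the order of the factors: $M^{\mathsf T} = M_n M_{n-1} \cdots M_1$, and this reversed product computes the convergents of $[a_n,\dots,a_1] = \alpha'/\beta'$. Reading off the entries of
\[
M^{\mathsf T} \;=\; \begin{pmatrix} p_n & q_n \\ p_{n-1} & q_{n-1} \end{pmatrix}
\]
against the corresponding convergent matrix for the reversed sequence gives immediately $\alpha' = p_n = \alpha$ and $\beta' = p_{n-1}$. This already yields the first asserted equality $\alpha' = \alpha$.

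To obtain the congruence, I would compute $\det M$ in two ways: on the one hand $\det M = (-1)^n$ since each factor contributes $-1$, and on the other hand $\det M = p_n q_{n-1} - p_{n-1} q_n$. Reducing the identity $p_n q_{n-1} - p_{n-1} q_n = (-1)^n$ modulo $\alpha = p_n$ kills the first term and leaves $p_{n-1} q_n \equiv (-1)^{n+1} \pmod{\alpha}$; substituting $\beta = q_n$ and $\beta' = p_{n-1}$ gives exactly $\beta\beta' \equiv (-1)^{n+1} \pmod{\alpha}$. To close the biconditional, I would then invoke Schubert's classification (Theorem \ref{thm:schubert}): reversing the twist regions of Conway's normal form is realized diagrammatically by a rotation of the diagram, so the two knots are always isotopic, and the computation above shows that this isotopy is recorded by precisely the stated congruence.

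The step I expect to be the main obstacle is the sign-and-inverse bookkeeping needed to reconcile the two algebraic conditions. Our determinant computation produces $\beta' \equiv (-1)^{n+1}\beta^{-1} \pmod{\alpha}$ (the inverse existing because $\gcd(\beta,\alpha)=1$), whereas Schubert's criterion is phrased as $\beta' \equiv \beta^{\pm 1} \pmod{\alpha}$; matching these requires care with the factor $(-1)^{n+1}$ and with the convention (oriented versus unoriented, and the ambiguity of $\beta$ up to sign and up to adding multiples of $\alpha$) under which Schubert's theorem is stated. I would also need to confirm that the matrix convention for $M_i$ is compatible with the crossing-sign rules of Figure \ref{fig:crossings}, so that $\alpha/\beta$ is literally the Schubert fraction of the knot rather than a variant differing by such a sign.
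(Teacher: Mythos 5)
The paper itself contains no proof of Theorem \ref{thm:palindrome}: it is quoted from Kauffman--Lambropoulou \cite{KauffLamb:class} (``see for example''), so your proposal can only be measured against the standard argument in that source. Your arithmetic half is exactly that standard argument, and it is correct: with $M_i=\left(\begin{smallmatrix} a_i & 1 \\ 1 & 0 \end{smallmatrix}\right)$, the symmetry of each factor gives $M^{\mathsf T}=M_n\cdots M_1$, whence $\alpha'=p_n=\alpha$ and $\beta'=p_{n-1}$, and $\det M=(-1)^n$ reduces mod $\alpha$ to $\beta\beta'\equiv(-1)^{n+1}\pmod{\alpha}$. One structural remark: this congruence is an unconditional identity, valid for \emph{every} sequence $(a_1,\dots,a_n)$, so the ``if and only if'' in the statement carries no separate content --- once one also shows the two knots are always isotopic, both sides of the biconditional are always true and the theorem follows. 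Your proof should be organized around those two unconditional facts rather than around deducing one side from the other.

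The genuine gap is in how you propose to close the isotopy. You invoke Schubert's Theorem \ref{thm:schubert}, whose unoriented criterion is $\beta'\equiv\beta^{\pm1}\pmod{\alpha}$, but your computation yields $\beta'\equiv(-1)^{n+1}\beta^{-1}\pmod{\alpha}$. For $n$ odd these agree; for $n$ even they give $\beta'\equiv-\beta^{-1}$, and negating the fraction mod $\alpha$ corresponds to taking the mirror image, so for a chiral knot Schubert applied to your congruence would assert the two knots are \emph{mirrors}, not isotopic. Hence the isotopy cannot be extracted from the congruence via Schubert uniformly in $n$; it has to be established diagrammatically, and that is where the real work of \cite{KauffLamb:class} lies. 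The $\pi$-rotation you mention in passing is the right idea but not a one-liner: the rotation must be reconciled with the parity-dependent closures of Figure \ref{fig:conway} and the position-dependent sign rules of Figure \ref{fig:crossings}, and the underlying input in \cite{KauffLamb:class} is the flip-invariance of rational tangles (invariance under $180^{\circ}$ rotations about the horizontal and vertical axes), a genuinely nontrivial lemma; alternatively one first reduces to odd $n$ by rewriting $[a_1,\dots,a_n]$ as $[a_1,\dots,a_n-1,1]$. You did correctly flag the sign-and-inverse bookkeeping as the expected obstacle --- but as written, that flagged obstacle is precisely the point at which the proposed argument is incomplete rather than merely delicate.
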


Finally, the \emph{mirror image} of a knot $K$ is the knot $K'$ whose diagram is obtained by changing all  the crossings of a diagram of $K$.  It is known that the mirror image of a 2-bridge knot $K$ with fraction $\frac{\alpha}{\beta}$ is that with fraction $\frac{\alpha}{-\beta}$.

Thus, for a knot $K$ given by the fraction $\frac{\alpha}{\beta}$ and any $\beta'$ such that $\beta\beta'\equiv1\mod\alpha$, we will consider the set
\begin{equation}
\label{set:S}
\mathcal{F}(K):=\left\{\frac{\alpha}{\beta},\frac{\alpha}{-\beta},\frac{\alpha}{\beta'},\frac{\alpha}{-\beta'} \text{ }\bigg| \text{ for all }\beta'\text{ satisfying }\beta\beta'\equiv1\mod\alpha\right\}.
\end{equation}

\subsection{Chebyshev knots}
\label{subsec:Cheb}

Here we introduce a knot model developed by Koseleff and Pecker \cite{KosPec, KosPec3, KosPec4} that will be used for our new random knot model in the following sections.

\begin{definition}
A (long) knot is a \emph{Chebyshev knot} $T(a,b,c)$ if it admits a one-to-one parametrization of the form $x = T_a(t)$; $y = T_b(t)$; $z = T_c(t + \varphi)$, where $T_n(\cos t')=\cos(nt')$ is the $n$th Chebyshev polynomial of the first kind, $t\in\mathbb{R}$, $a, b, c\in\mathbb{Z}$, and $\varphi$ is a real constant.
\end{definition}

These are polynomial analogues of \emph{Lissajous knots}, where each coordinate is parametrized by some $\cos(at+\varphi)$.  Not all knots are Lissajous (see for example \cite{BDHZ}); on the other hand,  for Chebyshev knots we have:
\begin{proposition}
\label{thm:bridge}
\cite[Proposition 9]{KosPec3}
Let $K$ be a knot with $br(K)$ its bridge index. Let $m\geq br(K)$ be an integer.
Then $K$ has a projection which is a Chebyshev curve $x = T_{2m-1}(t)$, $y =T_b(t)$, where $b\equiv 2$ (mod $2(2m-1)$).
\end{proposition}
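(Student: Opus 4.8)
The plan is to reduce the statement to a combinatorial claim about the plane Chebyshev curve and then realize $K$ inside it. Since the proposition asks only for a \emph{projection}, it suffices to exhibit a diagram of $K$ whose shadow---the underlying four-valent plane graph---is the curve $\mathcal{C}=\{(T_{2m-1}(t),T_b(t)) : t\in\mathbb{R}\}$ for some admissible $b$; the over/under information recovering $K$ is then assigned afterward. First I would record the geometry of $\mathcal{C}$. Substituting $t=\cos\theta$ turns the bounded part of $\mathcal{C}$ into the billiard-type trajectory $\theta\mapsto(\cos((2m-1)\theta),\cos(b\theta))$ of the kind drawn in Figure~\ref{fig:Cheb}, while for $|t|>1$ the two ends run off monotonically to infinity, giving a long knot. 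The hypothesis $b\equiv 2\pmod{2(2m-1)}$ yields $b\equiv 2\pmod{2m-1}$, so $\gcd(2m-1,b)=\gcd(2m-1,2)=1$ and the trace is a single curve rather than a link, while the evenness of $b$ fixes how the two ends escape.

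Taking the slowly oscillating coordinate $x=T_{2m-1}(t)$ as a Morse height function, the local maxima of $x$ along $\mathcal{C}$ are exactly the points where $\cos((2m-1)\theta)=1$, that is $\theta=2k\pi/(2m-1)$; counting these in a fundamental domain together with the outgoing arm yields exactly $m$ maxima, and symmetrically $m$ minima. Hence any knot carried by this shadow has bridge index at most $m$, which both confirms that the construction is consistent with the hypothesis $m\ge br(K)$ and tells me how many bridges I must match.

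For the realization I would start from the other side. Because $br(K)\le m$, the knot $K$ can be drawn as a long knot whose height function $x$ has exactly $m$ maxima and $m$ minima, so that a generic level set meets the diagram in $2m-1$ points---the same strand count as $\mathcal{C}$. I would isotope this diagram into billiard-table form, pushing the extrema onto two lines and straightening every strand to slope $\pm1$ across a grid with $2m-1$ columns, so that the crossings become a word $w$ in the elementary transpositions of neighbouring columns, read along the bounce sequence. Choosing $b$ in the class $b\equiv 2\pmod{2(2m-1)}$ and large enough to supply at least as many grid crossings as $w$ requires, I would realize the letters of $w$ by successive diagonal crossings of the Chebyshev trajectory and absorb the surplus crossings into cancelling Reidemeister~II pairs, which the free over/under assignment permits. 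In the case $m=2$ this is exactly the Conway normal form of Figure~\ref{fig:conway}, and Schubert's Theorem~\ref{thm:schubert} together with the Palindrome Theorem~\ref{thm:palindrome} confirm that reading the bounce sequence from either end gives the same knot.

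The main obstacle is the universality hidden in this step: the shadow $\mathcal{C}$ is completely rigid, determined by the single pair $(2m-1,b)$ and its billiard symmetry, so I must show that this one symmetric pattern can host an arbitrary crossing word $w$ on $2m-1$ columns. I expect this to come down to an explicit algorithm converting $w$ into an admissible bounce sequence while controlling two things at once: that $2m-1$ columns genuinely suffice---this is where $br(K)\le m$ is used sharply, beyond the mere count of maxima---and that the total number of bounces can be pushed into the residue class $b\equiv 2\pmod{2(2m-1)}$, say by appending trivial bounces near the ends. Checking that this bookkeeping closes up, rather than any single geometric move, is the heart of the argument.
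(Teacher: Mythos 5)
The first thing to note is that the paper does not prove this statement at all: Proposition \ref{thm:bridge} is imported verbatim from Koseleff--Pecker \cite[Proposition 9]{KosPec3}, so there is no in-paper proof to compare against. Judged on its own, your outline does parallel the strategy of the cited source --- put $K$ in plat/grid form on $2m-1$ strands, encode its crossings as a word $w$ in the elementary transpositions, realize $w$ on the rigid Chebyshev shadow by choosing crossing signs so that unused crossings cancel in Reidemeister~II pairs, and push $b$ into the right residue class. But as a proof it has a genuine gap, and you name it yourself: the universality lemma --- that the fixed periodic crossing pattern of $\mathcal{C}(2m-1,b)$ can host an \emph{arbitrary} word $w$ once $b$ is large enough, with the surplus crossings absorbable into cancelling pairs, and that the padding can be arranged to land $b$ in the class $b\equiv 2\pmod{2(2m-1)}$ --- is asserted as ``I expect this to come down to an explicit algorithm'' rather than proved. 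This is precisely the content of Koseleff--Pecker's argument, not a routine verification: the shadow is globally rigid, so ``appending trivial bounces near the ends'' is not a local move --- incrementing $b$ by $2(2m-1)$ replaces the entire curve by one containing an extra full period of the billiard pattern, and one must check that each such period can be trivialized by sign choices without disturbing the part of the diagram that carries $w$, and that the plat closure at the two escaping ends is the intended one. Without that lemma the proof does not close up.

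There are also two off-by-one slips in your Morse-theoretic bookkeeping. The polynomial $T_{2m-1}$ has $2m-2$ interior critical points, so the bounded part of $\mathcal{C}$ has $m-1$ interior maxima and $m-1$ interior minima (the points $\theta=0$ and $\theta=\pi$ connect to the monotone unbounded arms and are not extrema); your count of ``exactly $m$ maxima'' double-counts the arm. Correspondingly, a long diagram whose generic level sets meet it in $2m-1$ points has $m-1$ interior maxima, not $m$: with $m$ maxima and $m$ minima the maximal level-set cardinality would be $2m+1$, which does not match the strand count of $\mathcal{C}$. The counts as you state them are internally inconsistent, though the intended conclusion (bridge index of the closure at most $m$, matching $br(K)\le m$) is right once corrected. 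The arithmetic you do get right --- $\gcd(2m-1,b)=\gcd(2m-1,2)=1$ from $b\equiv 2\pmod{2(2m-1)}$, guaranteeing a single component --- and the identification of the $m=2$ case with Conway normal form (Figure \ref{fig:conway}) are sound, but they are the easy parts.
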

Koseleff and Pecker use this result to show their main Theorem 3 that every knot has a projection that is a Chebyshev plane curve.

\renewcommand{\thesubfigure}{}
\begin{figure}[ht]
\centering
\subfigure{
}
\hspace{2em}
\subfigure{%[A.    $T(3,7)$.]{
\begin{tikzpicture}[scale=.75]
    \foreach \i in {0,...,7} {
        \draw [very thin,dashed,lightgray] (\i,1) -- (\i,4);%  node [below] at (\i,\yMin) {$\i$};
    }
    \foreach \i in {1,...,4} {
        \draw [very thin,dashed,lightgray] (0,\i) -- (7,\i);% node [left] at (\xMin,\i) {$\i$};
    }
\draw[thick](-.25,.75) -- (3,4) -- (6,1) -- (7,2) -- (5,4) -- (2,1) -- (0,3) -- (1,4) -- (4,1) -- (7,4) -- (7.25,4.25);
\draw[color=white](-.25,-.25) -- (0,0);
\draw[color=white](7,5) -- (7.25,5.25);
\end{tikzpicture}
}
\caption{\label{fig:Cheb} A Chebyshev knot yielding a (crossingless-)billiard table diagram $T(a,b)$.}% for $a=3$ and $b=7$.}
\end{figure}
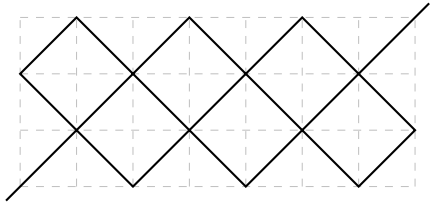

Jones and Przytycki \cite{JonPrz} showed that Lissajous knots are exactly the same set as billiard knots in a cube, where \emph{billiard knots} are periodic trajectories on rational polygonal billiards in a domain.  For Chebyshev knots, Koseleff and Pecker \cite[Proposition 2.6]{KosPec3} prove that there is an even easier visualization.

Consider a rectangular billiard table with width $a$ and length $b$ and the trajectory of a billiard ball shot from the lower left corner at slope 1 as in Figure \ref{fig:Cheb}.  Then any Chebyshev knot $T(a,b,c)$ with any choice of $c\in\mathbb{N}$ and $\varphi\in\mathbb{R}$ can be realized as the \emph{billiard table diagram} $T(a,b)$ with some choice of over- and under-crossing information for the $n=\frac{1}{2}(a-1)(b-1)$ crossings.

%We order these crossings in columns from left to right, proceeding upwards in each column.

Instead of considering all $c\in\mathbb{N}$ and $\varphi\in\mathbb{R}$, in this paper we consider just the set of $\{+,-\}^n$ possibilities for the signs of the $n$ crossings of the diagram for a fixed $T(a,b)$.

Let us consider a given string in $\{+,-\}^n$ as a word of length $n$ in the alphabet $\{+,-\}$.  For some letter $u\in\{+,-\}$, we will use the notation $\bar{u}$ to mean the \emph{negative} or \emph{opposite} letter; the negative $\bar{w}$ of a word is defined similarly.  For example, with $w=++-$ we have $\bar{w}=--+$.

The $i$th letter in the string gives the sign of the $i$th crossing when ordered in columns from left to right, proceeding upwards in each column.  Positive crossings are shown on the left of Figure \ref{fig:crossings}; negative crossings appear on the right.

\subsection{Chebyshev knots with $a=3$}
\label{subsec:a2}

The scope of this paper is only the case where $a=3$, giving $n=b-1$ crossings in the billiard table diagram $T(a,b)$.  Note that this class of knots includes all 2-bridge knots and the unknot (the only 1-bridge knot).  The results that follow pertain only to this case, although the authors are aware of some obvious generalizations for larger bridge index.

We also ignore the cases where $3|b$, as these give uninteresting two-component 2-bridge links instead of knots.

The interested reader may find many examples of Chebyshev knot diagrams for 2-bridge knots on Pierre-Vincent Koseleff's website:\\
\verb=http://www.math.jussieu.fr/~koseleff/knots/kindex1.html= 

We now introduce \emph{internal and external reduction moves} and establish equivalence between these and definitions in the continued fraction language. Thereafter we use these notions to prove in Theorem \ref{thm:noMoves} that there is just a small probability for knots to occur without any reduction moves. In fact, we prove a stronger result later in Corollary \ref{cor:noint}, following the Main Theorem \ref{thm:main}.

\begin{definition}
Let $T^{(n)}_K$ denote the event of a knot $K$ occurring in $T(3,n+1)$, when each of the $n$ crossings are chosen independently to be positive or negative with equal probability. In other words, the crossings are chosen uniformly at random.
\end{definition}
%Let $b=3k+i$ for some fixed $i=0,1,2$, with each case considered separately.  Let $n$ be the number of crossings, which is $b-1$.

\begin{definition}
\label{def:int}
Suppose a string of crossings of length $n$ contains the substring $+++$ or $---$.  An \emph{internal reduction move} is the deletion of the substring to result in a string of length $n-3$.  This move may be written as $\Int^{(n)}$.
\end{definition}

\begin{proposition}
\label{prop:int}
Performing an internal reduction move does not change the knot type.
%\todo[inline]{What about link type?}
\end{proposition}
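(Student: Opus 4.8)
The plan is to show that the two billiard table diagrams --- the one for the original word and the one for the word with the substring deleted --- represent isotopic knots, by producing an ambient isotopy supported in a neighborhood of the three crossings. It suffices to treat the substring $+++$, since the case $---$ follows by taking mirror images: replacing a word $w$ by $\bar w$ reverses every crossing, so an internal move on $---$ inside $w$ is the mirror of an internal move on $+++$ inside $\bar w$, and mirroring preserves ``being isotopic or not.''

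First I would localize. Fix the position in the length-$n$ word where $+++$ occurs and isolate the corresponding sub-tangle $\tau$ of $T(3,n+1)$, namely the part of the diagram carrying these three consecutive crossings together with the short arcs joining it to the rest of the diagram. Because $a=3$, three consecutive crossings constitute essentially one full period of the slope-$1$ billiard pattern, so $\tau$ has a predictable local form: the over/under data at all three crossings are forced by the three equal signs via the convention of Figure \ref{fig:crossings}. The heart of the argument is then to check that $\tau$ is isotopic, rel its four boundary points, to the tangle $\tau'$ obtained by erasing the three crossings (the local picture of the shortened word). I expect this to reduce to a short sequence of Reidemeister moves --- a Reidemeister II cancelling the over/under pair created by two of the equal signs, followed by a Reidemeister III or a planar isotopy straightening the remaining strand past the third level of the height-$3$ strip. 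Gluing $\tau'$ back in reproduces exactly the billiard diagram of the reduced word, giving the claimed equality of knot types.

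A fully rigorous alternative, which I would keep as a backstop, is to pass to continued fractions: using the dictionary between sign words and Conway normal form together with Schubert's Theorem \ref{thm:schubert}, one checks that deleting $+++$ leaves the fraction $\frac{\alpha}{\beta}$ unchanged up to the allowed equivalences, so that the two words name the same $2$-bridge knot. This collapses the whole claim to a single continued-fraction identity for the block produced by three equal consecutive signs. The main obstacle is pinning down that local configuration precisely --- equivalently, the exact continued-fraction block --- including all the over/under and sign bookkeeping in the $a=3$ pattern; once the local picture is correct, the isotopy (or the fraction identity) is immediate. A secondary subtlety is the placement of the substring, whether it sits at the start, the interior, or the end of the word, and whether it straddles a column boundary of the diagram; but the symmetry afforded by the Palindrome Theorem \ref{thm:palindrome} together with the mirror symmetry $w\mapsto\bar w$ should reduce all of these to the one generic local computation.
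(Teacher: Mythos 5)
Your main argument contains a genuine gap: the claimed local isotopy does not exist. In $T(3,b)$ a generic vertical line meets the diagram in \emph{three} strands, so the sub-tangle $\tau$ carrying three consecutive crossings has six boundary points, not four; your ``four boundary points'' suggests you are picturing a two-strand tangle. Concretely, since the crossing positions alternate between the top pair and bottom pair of strands, the block $+++$ is the half-twist braid $\sigma_1\sigma_2\sigma_1$ (or $\sigma_2\sigma_1\sigma_2$) on three strands. This tangle is not isotopic rel boundary to the crossingless tangle: it even induces the permutation exchanging the two outer endpoints, whereas three parallel strands induce the identity permutation, so no sequence of Reidemeister II/III moves supported near the three crossings can erase them. (Reidemeister II is unavailable in any case, as consecutive crossings of equal sign occur between \emph{different} pairs of strands.) The internal reduction move is inherently global, and this is exactly how the paper proves Proposition \ref{prop:int}: one rotates everything to one side of the block by a half-twist about the long (``$b$'') axis of the billiard table. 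This absorbs the three crossings, exchanges the positions of all subsequent crossings without changing their signs --- which is precisely the position shift caused by deleting an odd number of crossings, so the result is again a valid billiard table diagram for the shortened word --- and the half-twist terminates harmlessly because a single strand joins the diagram to infinity at each end of the long knot.

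Your continued-fraction backstop points in a viable direction, but as written it defers the entire content: you never state or verify the ``single continued-fraction identity,'' and the bookkeeping you wave at is exactly where the difficulty lives. Deleting three letters shifts the parity of every subsequent position, and by the sign convention of Figure \ref{fig:crossings} the interpretation of each sign as a positive or negative $a_i$ depends on that parity; this parity shift is the continued-fraction shadow of the global position exchange above, not a secondary subtlety to be handled ``by symmetry.'' The mirror reduction $---\leftrightarrow+++$ via $w\mapsto\bar w$ is fine, but the Palindrome Theorem \ref{thm:palindrome} does not reduce an interior occurrence of the substring to a boundary one. Until the local braid is identified correctly as the half twist and the global compensation is made explicit (either geometrically, as in the paper, or via a proved fraction identity with the parity shift tracked), the proof is incomplete.
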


\begin{proof}
This move is a half-twist rotation along the ``$b$'' axis, removing three crossings as in Figure \ref{fig:IntRed}.  All crossings that appear afterwards have their positions exchanged without changing signs:  for example, a positive crossing between the first two strands becomes a positive  crossing between the second two strands.  This coincides with the change of positions caused by the removal of an odd number of crossings.

Since there is but a single strand joining the two ends of the billiard table diagram, this half-twist does not propogate.
\end{proof}

%An \emph{Internal Reduction Move $\Int$} can be performed when $+++$ or $---$ occurs in the string.

\renewcommand{\thesubfigure}{}
\begin{figure}[ht]
\centering
\subfigure{%[A.    $T(3,7)$.]{
\begin{tikzpicture}[scale=.6]
    \foreach \i in {0,...,4} {
        \draw [very thin,dashed,lightgray] (\i,1) -- (\i,4);%  node [below] at (\i,\yMin) {$\i$};
    }
    \foreach \i in {1,...,4} {
        \draw [very thin,dashed,lightgray] (-.5,\i) -- (4.5,\i);% node [left] at (\xMin,\i) {$\i$};
    }

\draw[thick,color=red] (-.5,2.5) -- (1,4) -- (4,1) -- (4.5,1.5);

\fill[color=white] (3,2) circle (5pt);

\draw[thick](4.5,3.5) -- (2,1) -- (-.5,3.5);

\fill[color=white] (1,2) circle (5pt);
\fill[color=white] (2,3) circle (5pt);

\draw[ultra thick,color=blue](-.5,1.5) -- (0,1) -- (3,4) -- (4.5,2.5);

\end{tikzpicture}
}
\hspace{2em} $\rightsquigarrow$ \hspace{2em}
\subfigure{%[B.    $T(5,7)$.]{
\begin{tikzpicture}[scale=.6]
    \foreach \i in {0,...,4} {
        \draw [very thin,dashed,lightgray] (\i,1) -- (\i,4);%  node [below] at (\i,\yMin) {$\i$};
    }
    \foreach \i in {1,...,4} {
        \draw [very thin,dashed,lightgray] (-.5,\i) -- (4.5,\i);% node [left] at (\xMin,\i) {$\i$};
    }
\draw[thick,color=red] (-.5,2.5) -- (.5,3.5) -- (3.5,3.5) -- (4,4) -- (4.5,3.5);

%\fill[color=white] (3,2) circle (5pt);

\draw[thick](4.5,1.5) -- (3.5,2.5) -- (.5,2.5) -- (-.5,3.5);

%\fill[color=white] (1,2) circle (5pt);
%\fill[color=white] (2,3) circle (5pt);

\draw[ultra thick,color=blue](-.5,1.5) -- (0,1) -- (.5,1.5) -- (3.5,1.5) -- (4.5,2.5);

\end{tikzpicture}
}
\caption{\label{fig:IntRed}An example of an internal reduction move, where the $i$th letter in the string gives the sign of the $i$th crossing, ordered from left to right.}
\end{figure}
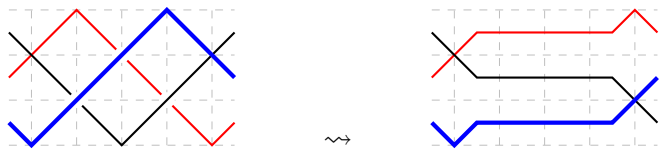

\begin{definition}
\label{def:ext}
Suppose a string of crossings of length $n$ begins with the substring $++*$ or $--*$ or ends with the substring $*++$ or $*--$, where $*$ could be either $+$ or $-$.  An \emph{external reduction move} is the deletion of the substring to result in a string of length $n-3$.  This move may be written as $\Ext^{(n)}$.
\end{definition}

\begin{proposition}
\label{prop:ext}
Performing an external reduction move does not change the knot type.
\end{proposition}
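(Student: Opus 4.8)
The plan is to mirror the proof of Proposition \ref{prop:int}, exploiting the extra flexibility available at the two ends of the billiard table diagram, where a single free strand (the long-knot end going to infinity) together with a turnaround arc at the side wall replaces the through-strands present in the interior. The internal move removed a rigid half-twist on all three strands; the external move should instead remove a full twist that is ``anchored'' at the boundary, which is why only two of the three deleted crossings need to share a sign.

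First I would reduce to a single canonical case. Rotating $T(3,n+1)$ by a half-turn in the plane is an ambient isotopy of the diagram: it preserves over/under information and every crossing sign (a half-turn is orientation-preserving and fixes the two crossing types of Figure \ref{fig:crossings}), while it simultaneously reverses the left-to-right column order and the bottom-to-top order within each column. Hence it reverses the crossing string, sending crossing $i$ to crossing $n+1-i$, so a terminal $*++$ becomes an initial $++*$ and a terminal $*--$ becomes an initial $--*$. This is precisely the string-reversal symmetry underlying the Palindrome Theorem \ref{thm:palindrome}, and it lets me treat only an initial $++*$ (the $--*$ case being identical after exchanging all signs).

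Next I split on the third letter. If $*$ agrees with the first two letters, the initial substring is literally $+++$, so the external move coincides with an internal move and the conclusion is immediate from Proposition \ref{prop:int}. The genuinely new case is therefore $++-$, where the two leading equal signs form a complete run of length two abutting the start of the next twist region. Here the $++$ is a full twist between two of the three strands, and near the start of the diagram one of these strands is a turnaround arc joined to the free end rather than a through-strand. I would show, with a figure in the style of Figure \ref{fig:IntRed}, that this full twist is undone by rotating the turnaround arc about the free end (an isotopy built from Reidemeister-I-type moves), and that this rotation slides the adjacent third crossing off the end of the diagram \emph{regardless of its sign}. As an independent certificate I would translate the deletion of $++-$ into the continued-fraction language, computing its effect on the first one or two entries of the Conway sequence $[a_1,a_2,\ldots]$ and checking by Schubert's Theorem \ref{thm:schubert} that the resulting rational number still lies in $\mathcal{F}(K)$ of \eqref{set:S}, so that the knot type is unchanged.

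I expect the main obstacle to be exactly this last step: making precise why the arbitrary-signed third crossing is absorbed. Unlike the internal move, there is no rigid half-twist symmetry to invoke, so the argument must follow the turnaround cap and the free end through an explicit Reidemeister sequence (or, equivalently, verify the continued-fraction computation above). Care is needed to confirm that the untwisting isotopy terminates at the boundary and does not propagate deeper into the diagram --- the same role played by the single connecting strand in Proposition \ref{prop:int} --- and to check that the sign of $*$ genuinely drops out of both the geometric move and the continued-fraction bookkeeping.
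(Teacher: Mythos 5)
Your preliminary reductions are sound --- the half-turn symmetry of the billiard trajectory does preserve the letters and reverse the string, so it suffices to treat moves at one end, and the $+++$ case does reduce to Proposition \ref{prop:int}; you also correctly anticipate that a move at the left end must be followed by the half-twist rotation of Proposition \ref{prop:int} to restore the convention that the first crossing lies between the first two strands, which is exactly what the paper states. But the crux of the proposition --- why the third crossing $*$ vanishes \emph{regardless of its sign} --- is precisely the step you leave open, and it is a genuine gap, not a routine verification. The paper's proof pins it down in one line, and with a different mechanism from your ``rotate the turnaround arc'' sketch: the two equal-letter crossings adjacent to the end cancel by a Reidemeister II move (note they are \emph{not} a full twist on a fixed pair of strands, since consecutive crossings alternate between the positions $1$--$2$ and $2$--$3$; they form a cancelling clasp only because the turnaround arc connects the two positions), after which the remaining crossing $*$ sits on a small loop through the cap and is removed by a Reidemeister I move. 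A Reidemeister I kink of either sign is removable, and this is the entire explanation of the sign-independence of $*$; it also shows that your case split on the third letter is unnecessary, since the $\mathrm{RII}$--$\mathrm{RI}$ sequence is uniform in $*$.

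Your backup ``certificate'' via continued fractions is also not the quick check you suggest, because the letters of the string are \emph{not} the Conway coefficients: by Figure \ref{fig:crossings}, a crossing contributes to the $a_i$ with a sign depending on its position, so deleting $++-$ is not the same as deleting three entries $\pm 1$ from $[a_1,a_2,\ldots]$. Indeed, the naive computation gives $[1,1,-1,x]=2-x$, which does not even preserve the numerator $\alpha$ and so would appear to change the knot under Theorem \ref{thm:schubert}; only after inserting the correct position-dependent dictionary (under which deleting three letters also flips the signs of the remaining coefficients) does one find, e.g., $[1,-1,-1,x]=x/(2x-1)$, which preserves the numerator and matches the mirrored remainder modulo $\alpha$, consistent with \ref{set:S}. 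So the arithmetic route can be made to work, but it requires setting up the letter-to-coefficient dictionary that your proposal omits; as written, the key case $++*$ with $*$ of the opposite sign is asserted, flagged as an obstacle, and never proved.
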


\begin{proof}
This move is a Reidemeister II move followed by a Reidemeister I move as in Figure \ref{fig:ExtRed}.

An external reduction move performed at the beginning must be followed by a complete half-twist rotation as above so that the first crossing appears between the first two strands.
\end{proof}

Note that some of the external reduction moves are examples of the internal reduction move.

\renewcommand{\thesubfigure}{}
\begin{figure}[ht]
\centering
\subfigure{%[A.    $T(3,7)$.]{
\begin{tikzpicture}[scale=.6]
    \foreach \i in {0,...,4} {
        \draw [very thin,dashed,lightgray] (\i,1) -- (\i,4);%  node [below] at (\i,\yMin) {$\i$};
    }
    \foreach \i in {1,...,4} {
        \draw [very thin,dashed,lightgray] (0,\i) -- (4.5,\i);% node [left] at (\xMin,\i) {$\i$};
    }
\draw[thick,color=red](4.5,3.5) -- (2,1) -- (0,3) -- (1,4) -- (4,1) -- (4.5,1.5);

\fill[color=white] (1,2) circle (5pt);
\fill[color=white] (2,3) circle (5pt);

\draw[ultra thick,color=blue](-.25,.75) -- (3,4) -- (4.5,2.5);

\end{tikzpicture}
}
\hspace{2em} $\rightsquigarrow$ \hspace{2em}
\subfigure{%[B.    $T(5,7)$.]{
\begin{tikzpicture}[scale=.6]
    \foreach \i in {0,...,4} {
        \draw [very thin,dashed,lightgray] (\i,1) -- (\i,4);%  node [below] at (\i,\yMin) {$\i$};
    }
    \foreach \i in {1,...,4} {
        \draw [very thin,dashed,lightgray] (0,\i) -- (4.5,\i);% node [left] at (\xMin,\i) {$\i$};
    }
\draw[thick,color=red](4.5,3.5) -- (3,2) -- (4,1) -- (4.5,1.5);

%\fill[color=white] (1,2) circle (5pt);
%\fill[color=white] (2,3) circle (5pt);

\draw[ultra thick,color=blue](2.75,4.25) -- (4.5,2.5);

\end{tikzpicture}
}
\caption{\label{fig:ExtRed}An example of an external reduction move, where the $i$th letter in the string gives the sign of the $i$th crossing, ordered from left to right.}
\end{figure}
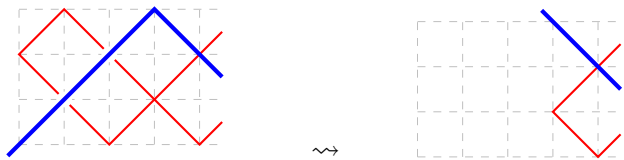

These definitions coincide with a  notion of Koseleff and Pecker:

\begin{definition}
\label{def:1regular}
\cite[Definition 2.4]{KosPec4}A continued fraction $[a_1,a_2,\ldots,a_n]$ is \emph{1-regular} if it has the following
properties:
\begin{itemize}
	\item[(i.)] $a_i \neq 0$
	\item[(ii.)] $a_{n-1}a_n > 0$, and
	\item[(iii.)] if $a_ia_{i+1} < 0$, then $a_{i+1}a_{i+2} > 0$ for $i=1,\ldots,n-2$.
\end{itemize}
\end{definition}

\begin{remark}
\label{rem:1regular}
In Definition \ref{def:1regular} above, property (ii.) is equivalent to there being \emph{NO external reduction moves} at the end on the right, and property (iii.) is equivalent to there being \emph{NO internal reduction moves}.
\end{remark}

\iffalse
Lemma 3.10 Let
α
β = [e1, . . . , en] be a 1-regular continued fraction (ei = ±1). We have
• n ≡ 2 (mod 3) if and only if α is even and β is odd. b is 3
• n ≡ 0 (mod 3) if and only if α is odd and β is even. b is 1
• n ≡ 1 (mod 3) if and only if α and β are odd.        b is 2
\fi

Assuming $a_1=1$ throughout, this definition is used to show uniqueness of a 1-regular continued fraction for a given rational number:

\begin{theorem}
\label{thm:1regular}
\cite[Theorem 3.1]{KosPec4}
Let $\frac{\alpha}{\beta}$ be a rational number. There is a unique 1-regular continued fraction such that $a_i = \pm1$ and $\frac{\alpha}{\beta}= [a_1,a_2,\ldots,a_n]$.  
Furthermore, $\frac{\alpha}{\beta} > 1$ if and only if $a_2 = 1$.
\end{theorem}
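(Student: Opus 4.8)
The plan is to prove existence and uniqueness together by induction, with the driving device a ``range lemma'' that pins down exactly which rationals arise and how the value of $[a_1,\dots,a_n]$ is controlled by its leading entries. Throughout I use the basic recursion $[a_1,a_2,\dots,a_n]=a_1+\tfrac{1}{[a_2,\dots,a_n]}$ together with the symmetry that negating every entry sends $[a_1,\dots,a_n]$ to $-[a_1,\dots,a_n]$ while preserving 1-regularity (all three conditions of Definition \ref{def:1regular} depend only on signs and products of consecutive entries). Since $a_1=1$ is assumed, it is enough to treat positive rationals, and the symmetry reduces the case of a leading $-1$ to this one.

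First I would establish the following strengthened statement by induction on $n$. Write $\mathcal P$ (resp.\ $\mathcal N$) for the 1-regular $\pm1$ sequences with $a_1=+1$ (resp.\ $a_1=-1$). The claim is that every sequence in $\mathcal P$ has value in $(0,\infty)$, with value $=1$ exactly when $n=1$, value $>1$ exactly when $a_2=+1$, and value in $(0,1)$ exactly when $a_2=-1$; the mirror statement holds for $\mathcal N$ with $(-\infty,0)$, $-1$, and $a_2=-1\Leftrightarrow$ value $<-1$. The inductive step uses the recursion: when $a_2=+1$ the tail $[a_2,\dots,a_n]$ lies in $\mathcal P$, so by hypothesis its value is positive and $1+\tfrac{1}{(\text{positive})}>1$; when $a_2=-1$, condition (iii) of Definition \ref{def:1regular} applied at $i=1$ forces $a_3=-1$, so the tail lies ``deep'' in $\mathcal N$ and by hypothesis has value $<-1$, whence $1+\tfrac{1}{(\text{value}<-1)}\in(0,1)$. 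This is exactly where 1-regularity does its work: condition (iii) is what prevents the tail from landing in $(-1,0)$ and thereby keeps all of $\mathcal P$ positive.

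Granting the range lemma, uniqueness is immediate by peeling: the sign of $\tfrac{\alpha}{\beta}$ forces $a_1$, comparison of $\tfrac{\alpha}{\beta}$ with $1$ forces $a_2$, and then the tail must equal $\tfrac{1}{\alpha/\beta-a_1}$, to which the induction hypothesis applies (the tail is again 1-regular, since conditions (ii) and (iii) for $[a_2,\dots,a_n]$ are inherited from those for $[a_1,\dots,a_n]$). The ``furthermore'' clause $\tfrac{\alpha}{\beta}>1\Leftrightarrow a_2=1$ is a direct read-off of the range lemma. For existence I would run the greedy algorithm suggested by this peeling: set $a_1=1$ and recurse on $\tfrac{1}{\alpha/\beta-1}=\tfrac{\beta}{\alpha-\beta}$, choosing each successive sign to match the sign of the current value. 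I would show the process terminates by tracking a Euclidean-type complexity of the reduced fraction and invoke the range lemma to confirm that the signs produced never create a double sign change, so the output is genuinely 1-regular.

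The step I expect to be the main obstacle is making existence airtight: a single peeling step $\tfrac{\alpha}{\beta}\mapsto\tfrac{\beta}{\alpha-\beta}$ need not decrease the naive complexity $|\alpha|+|\beta|$ when $\tfrac{\alpha}{\beta}\in(0,1)$ (there $\beta>\alpha$ and $2\beta-\alpha$ can exceed $\alpha+\beta$), so I will have to either bundle the forced pair of steps $a_2=a_3=-1$ together or choose the complexity measure more carefully to guarantee strict decrease. Verifying simultaneously that termination holds and that the forced signs respect condition (iii)---rather than treating these as independent---is the delicate part, and the range lemma is the tool that ties them together.
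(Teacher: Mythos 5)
This theorem is imported verbatim from Koseleff--Pecker \cite[Theorem 3.1]{KosPec4}: the paper you are working from gives no proof at all, so there is no internal argument to compare against. In the cited source the result is established inside their calculus of homographies (the words in $P$ and $G$ behind the $PGP(\infty)$ notation quoted in Remark \ref{rem:PGP}), whereas your proposal is a direct, self-contained induction; that is a legitimately different and more elementary route, and it works. Your range lemma is the right engine: I checked the trichotomy and the peeling argument for uniqueness, and both go through, as does the read-off of the ``furthermore'' clause. Two details deserve explicit attention when you write it up. First, in the inductive step for $a_2=-1$ you invoke condition (iii) of Definition \ref{def:1regular} at $i=1$ to force $a_3=-1$, which tacitly assumes $n\geq 3$; for $n=2$ the string $[1,-1]$ must instead be excluded by condition (ii), and that exclusion is also exactly what makes your clause ``value $=1$ only when $n=1$'' true, so the case $n=2$ should be split off and (ii) cited. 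Second, the obstacle you flag in existence is real but resolves precisely as you predict by bundling the forced block: for $\frac{\alpha}{\beta}\in(0,1)$ with $0<\alpha<\beta$ coprime, the three greedy steps $1,-1,-1$ produce the new value $\frac{\alpha}{2\alpha-\beta}$, and the complexity $\alpha+|2\alpha-\beta|$ equals either $\beta-\alpha$ or $3\alpha-\beta$, both strictly smaller than $\alpha+\beta$, while the single step at a value $>1$ sends $(\alpha,\beta)$ to $(\beta,\alpha-\beta)$ with complexity $\alpha<\alpha+\beta$; so termination is strict. The halting states are benign for condition (ii): the running value can never equal $-1$ immediately after emitting $+1$ (that would force a previous value $0$), so the algorithm stops either at length $1$ or with two equal final signs. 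Your observation that a sign change in the emitted string occurs only at a value of absolute value less than $1$, after which the next value has absolute value greater than $1$ and the same sign as the following emission, is exactly what verifies (iii). With those two points written out, the proposal is a complete and correct proof of the cited theorem.
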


\begin{remark}
\label{rem:alphabeta1}
The last property $a_2=1$, together with the supposition throughout that $a_1=1$, is equivalent to there being \emph{NO external reduction move} at the end on the left.  Therefore this is the condition we want on the continued fraction, together with being 1-regular.  As such, we will take $|\beta|\leq\alpha$.
\end{remark}

This will be used to prove Theorem \ref{thm:noMoves}, %together with Proposition \ref{prop:crossingNO}:
 along with one more idea.
%We introduce one more idea for the first Theorem \ref{thm:noMoves}.

\begin{definition}
\label{def:length1}
\cite[almost Definition 3.3]{KosPec4}
Let $\frac{\alpha}{\beta}>1$ be the 1-regular continued fraction $[a_1,\ldots,a_n]$, $a_i = \pm1$. We will denote its \emph{length} (or \emph{reduced length}) $n$ by $\ell(\frac{\alpha}{\beta})$. Note that $\ell(\frac{\alpha}{-\beta})=\ell(\frac{\alpha}{\beta})$.
\end{definition}

\begin{remark}
\label{rem:length}
The original definition just has $\frac{\alpha}{\beta}>0$, but as stated above, we need to ensure that there is no external reduction move at the end on the left.
\end{remark}

Following this definition, Koseleff and Pecker provide several results on the topic, including two that will appear in the proof of Theorem \ref{thm:noMoves} below.

\begin{remark}
\label{rem:PGP}
The first of these involves some new notation, $\frac{\alpha}{\beta}= PGP(\infty)$, which is equivalent to the condition that there are no external reduction moves.

We also let $cn(K)$ be the crossing number of the knot $K$.
\end{remark}

%\todo[inline]{Where to put this, if at all? -- PVK says that the P and P are just to make sure there are no external reduction moves!  That's good!}
\begin{proposition}
\label{prop:3.8}
\cite[abbreviated Proposition 3.8]{KosPec4}
Let $\alpha > \beta > 0$ and consider $\frac{\alpha}{\beta}= PGP(\infty)$ and $N = cn (\frac{\alpha}{\beta})$. Let $\beta'$
be such that $0 < \beta' < \alpha$ and $\beta\beta' \equiv (-1)^{N-1} \mod \alpha$. Then we [...] also have
$\ell(\frac{\alpha}{\alpha - \beta}) + \ell(\frac{\alpha}{\beta}) = 3N - 2$ and $\ell(\frac{\alpha}{\beta'}) = \ell(\frac{\alpha}{\beta})$.
\end{proposition}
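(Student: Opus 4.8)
The plan is to reduce both identities to a single \emph{key lemma} relating the reduced length to the crossing number:
\begin{equation*}
\ell\left(\frac{\alpha}{\beta}\right) = N + c,
\end{equation*}
where $c$ denotes the number of sign changes (indices $i$ with $a_ia_{i+1}<0$) of the $1$-regular expansion $\frac{\alpha}{\beta}=[a_1,\ldots,a_\ell]$, $a_i=\pm1$. Passing to the billiard crossing string $s_i=(-1)^{i+1}a_i$, a sign change of the continued fraction is precisely a \emph{repeated} crossing, so the lemma says that $N$ is the number of maximal constant-sign runs of $s$. To prove it I would convert the $\pm1$ expansion into the all-positive (alternating) continued fraction $\frac{\alpha}{\beta}=[b_1,\ldots,b_m]$ with $b_i\geq1$, using the standard identities that absorb a $\pm1$ run into a positive partial quotient, and then invoke $N=\sum_i b_i$, the crossing number of the reduced alternating Conway diagram by Tait's conjecture (Kauffman--Murasugi--Thistlethwaite, cited in the introduction). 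This conversion, and the bookkeeping of how runs correspond to partial quotients, is where I expect the real work to lie.

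For the first identity I would exploit that $\frac{\alpha}{\beta}$ and $\frac{\alpha}{\alpha-\beta}$ share a common tail. Since $a_1=1$ by our normalization and $a_2=1$ by Theorem \ref{thm:1regular} (as $\frac{\alpha}{\beta}>1$), writing $\frac{\alpha}{\beta}=[1,1,a_3,\ldots,a_\ell]$ one computes directly that
\begin{equation*}
\frac{\alpha}{\alpha-\beta}=1+[a_2,a_3,\ldots,a_\ell]=[2,a_3,\ldots,a_\ell].
\end{equation*}
Re-normalizing the leading $2$ into $\pm1$ entries (again through the continued-fraction identities, with a short case analysis on the sign of $a_3$, equivalently on whether the first run of $\frac{\alpha}{\beta}$ has length exactly $2$ or at least $3$) produces the $1$-regular expansion of $\frac{\alpha}{\alpha-\beta}$ and lets me compare its sign-change count $c'$ with $c$. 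The statement to verify is $c+c'=N-2$; together with the key lemma this yields $\ell(\frac{\alpha}{\beta})+\ell(\frac{\alpha}{\alpha-\beta})=2N+(c+c')=3N-2$. As a sanity check, $\frac{3}{1},\frac{3}{2}$ gives $4+3=7=3\cdot3-2$. An alternative to the direct count is induction on $N$, removing one crossing from the last twist region of the Conway form and showing the sum increases by exactly $3$.

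For the second identity I would use the Palindrome Theorem \ref{thm:palindrome}. Reversal $[a_1,\ldots,a_\ell]\mapsto[a_\ell,\ldots,a_1]$ preserves conditions (i)--(iii) of $1$-regularity, and because $a_1=a_2=1$ the reversed word satisfies condition (ii) at its new right-hand end; hence, after negating all entries if necessary (that is, after passing to the mirror), it is the $1$-regular expansion of $\frac{\alpha}{\gamma}$ of the same length $\ell$, with $\beta\gamma\equiv(-1)^{\ell+1}\pmod{\alpha}$. It remains to identify $\gamma$ with $\beta'$. The leading sign of the reversed word is $a_\ell=(-1)^c$, and by the key lemma $\ell-N=c$, so the parity of $c$ exactly reconciles the exponent $(-1)^{\ell+1}$ coming from Theorem \ref{thm:palindrome} with the hypothesis $\beta\beta'\equiv(-1)^{N-1}\pmod{\alpha}$: when $c$ is even the reversed word is already normalized and $\gamma=\beta'$, while when $c$ is odd it represents the negative fraction $\frac{\alpha}{-\beta'}$, so by the convention $\ell(\frac{\alpha}{-\beta'})=\ell(\frac{\alpha}{\beta'})$ of Definition \ref{def:length1} we again conclude $\ell(\frac{\alpha}{\beta'})=\ell=\ell(\frac{\alpha}{\beta})$. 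Here $\gcd(\alpha,\beta)=1$ and Theorem \ref{thm:schubert} are used to pin $\beta'$ down modulo $\alpha$.

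In short, once the key lemma $\ell=N+c$ and the accompanying count $c+c'=N-2$ are in place, both identities follow by formal manipulation with the Palindrome and uniqueness theorems; the two continued-fraction conversions underlying those facts are the main obstacle.
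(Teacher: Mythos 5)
The first thing to say is that the paper contains no proof of this statement to compare you against: Proposition \ref{prop:3.8} is quoted verbatim (with elisions) from Koseleff--Pecker \cite{KosPec4}, and the paper simply imports it for use in Lemma \ref{prop:lengths}. So your attempt can only be measured against the original source, where the result is established by Koseleff--Pecker's continued-fraction (matrix) calculus. Judged on its own terms, your skeleton is sound and your two combinatorial claims are true: for instance $7/5=[1,1,1,-1,-1,1,1]$ has $\ell=7$, $c=2$, $N=5$, while $7/2=[1,1,-1,-1,-1,-1]$ has $\ell=6$, $c=1$, so $\ell=N+c$ and $c+c'=N-2$ both check, and the billiard string $s_i=(-1)^{i+1}a_i$ indeed has exactly $N$ maximal runs. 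Your palindrome argument for $\ell(\frac{\alpha}{\beta'})=\ell(\frac{\alpha}{\beta})$ is essentially complete modulo the key lemma: reversal preserves 1-regularity, $a_1=a_2=1$ supplies condition (ii.) at the new right end, uniqueness (Theorem \ref{thm:1regular}) identifies the possibly-negated reversed word as \emph{the} 1-regular expansion, and the parity reconciliation $(-1)^{\ell+1}=(-1)^{N-1}(-1)^{c}$ is exactly right, with Definition \ref{def:length1} absorbing the mirror when $c$ is odd.

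The caveat is where the weight sits. Given $\ell=N+c$, the count $c+c'=N-2$ is not a reduction but a verbatim restatement of the first identity, so your plan relocates rather than removes the difficulty: everything rides on actually executing two pieces of bookkeeping that you only gesture at. One is the proof of $\ell=N+c$ itself, which needs the run/partial-quotient correspondence together with the fact that the positive expansion $[b_1,\ldots,b_m]$ (normalized with $b_m\geq 2$) gives a \emph{reduced} alternating diagram, so that Kauffman--Murasugi--Thistlethwaite yields $N=\sum_i b_i$. The other is the renormalization of $[2,a_3,\ldots,a_\ell]$, which is \emph{not} a local prefix substitution --- e.g.\ $7/3=[2,3]$ becomes $[1,1,-1,-1,1,1,1]$, whose tail $[1,1,1]=3/2$ differs globally from the expansion $[1,1,-1,-1]$ of $3$ --- so the ``short case analysis on the sign of $a_3$'' must in fact be an induction propagating through the whole word, of the kind Koseleff--Pecker carry out. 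In short: a faithful and correct reconstruction of the method in outline, but as written it is a plan whose two load-bearing lemmas remain unproven.
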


%\todo[inline]{Include \cite[Proposition 4.3 (2.)]{KosPec4}?}
\begin{proposition}
\label{prop:4.3}
\cite[abbreviated Proposition 4.3 (2.)]{KosPec4}
Let $K$ be a two-bridge knot with crossing number $N$.  There exists $\frac{\alpha}{\beta}>1$ such that $K$ can be represented by $(\pm\frac{\alpha}{\beta})$ and $\ell(\frac{\alpha}{\beta})<\frac{3}{2}N-1$.   [...]%If $\frac{\alpha}{\beta}$ is such a fraction, then $b=\ell(\frac{\alpha}{\beta})+1$ is the minimal integer such that $K$ has a Chebyshev diagram $T(3,b)$.
\end{proposition}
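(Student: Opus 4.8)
The plan is to derive this as a short averaging argument built directly on Proposition \ref{prop:3.8}, with essentially all of the substance concentrated in the strictness of the inequality. First I would fix a good representative: by Theorem \ref{thm:1regular} together with Remark \ref{rem:alphabeta1}, I can write $K$ as a fraction $\frac{\alpha}{\beta}$ with $\alpha > \beta > 0$ (so $\frac{\alpha}{\beta} > 1$) whose $1$-regular continued fraction admits no external reduction move at either end; in the language of Remark \ref{rem:PGP} this says $\frac{\alpha}{\beta} = PGP(\infty)$, and its crossing number is $cn(\frac{\alpha}{\beta}) = N$. Since $K$ is a knot rather than a two-component link, $\alpha$ is odd, so $\gcd(\alpha,\beta)=1$ and $0 < \alpha-\beta < \alpha$.

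Next I would feed this representative into Proposition \ref{prop:3.8}, which yields
\[
\ell\!\left(\tfrac{\alpha}{\alpha-\beta}\right) + \ell\!\left(\tfrac{\alpha}{\beta}\right) = 3N - 2.
\]
The key observation is that $\frac{\alpha}{\alpha-\beta}$ represents the mirror image of $K$: because $\alpha-\beta \equiv -\beta \pmod{\alpha}$, Theorem \ref{thm:schubert} identifies $\frac{\alpha}{\alpha-\beta}$ with $\frac{\alpha}{-\beta}$, and the latter is by definition the mirror of $K$. Hence the two fractions $\frac{\alpha}{\beta}$ and $\frac{\alpha}{\alpha-\beta}$, both greater than $1$, represent $K$ and its mirror, so $K$ is realized (up to mirror image) by whichever of them has the smaller reduced length. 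Averaging the displayed identity then gives
\[
\min\!\left\{\ell\!\left(\tfrac{\alpha}{\beta}\right),\,\ell\!\left(\tfrac{\alpha}{\alpha-\beta}\right)\right\} \le \frac{3N-2}{2} = \frac{3}{2}N - 1.
\]

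The remaining point, and the one demanding the real work, is to upgrade this to a \emph{strict} inequality. Since the two reduced lengths are positive integers summing to $3N-2$, it suffices to show they are never equal: the smaller of two distinct integers with a fixed sum $S$ is strictly less than $S/2$. To establish distinctness I would invoke the arithmetic dictionary of Koseleff--Pecker relating $\ell(\frac{\alpha}{\beta}) \bmod 3$ to the parities of $\alpha$ and $\beta$; with $\alpha$ odd one has $\ell \equiv 1 \pmod 3$ when $\beta$ is odd and $\ell \equiv 0 \pmod 3$ when $\beta$ is even. As $\beta$ and $\alpha-\beta$ have opposite parity (again because $\alpha$ is odd), the two lengths lie in different residue classes modulo $3$ and so cannot coincide. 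This forces the shorter representative to satisfy $\ell < \frac{3}{2}N - 1$, as claimed. The main obstacle is entirely in this last step: the averaging argument alone only produces $\le \frac{3}{2}N - 1$, and ruling out the exact-average equality case is precisely what makes it necessary to track the parities of $\alpha$ and $\beta$ rather than reasoning only with the diagrammatic length.
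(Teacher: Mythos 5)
Your argument is sound, but be aware that the paper contains no proof of this statement to compare against: it is imported, abbreviated, directly from Koseleff--Pecker \cite{KosPec4} as their Proposition 4.3(2), and is used here as a black box (its only role is in the proof of Lemma \ref{prop:lengths}). Your reconstruction from Proposition \ref{prop:3.8} is correct: the averaging step is immediate from $\ell(\frac{\alpha}{\alpha-\beta})+\ell(\frac{\alpha}{\beta})=3N-2$, the identification of $\frac{\alpha}{\alpha-\beta}$ with the mirror of $K$ is standard and matches the paper's own remark that the mirror of $\frac{\alpha}{\beta}$ is $\frac{\alpha}{-\beta}$, and strictness follows once the two lengths are known to be distinct. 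Two caveats. First, the residue dictionary you invoke ($\alpha$ odd: $\ell\equiv 1 \pmod 3$ iff $\beta$ odd, $\ell\equiv 0 \pmod 3$ iff $\beta$ even) is nowhere stated in this paper -- it is Lemma 3.10 of \cite{KosPec4} (it survives only as a commented-out note in the source) -- so you must cite it explicitly; the only fragment available inside this paper is the weaker observation, made in the proof of Lemma \ref{prop:lengths}, that no reduced length of a knot is $2 \bmod 3$. Amusingly, that weaker fragment already suffices and is cheaper than the full dictionary: if the two lengths were equal, each would be $x$ with $2x=3N-2$, so $2x\equiv 1 \pmod 3$, forcing $x\equiv 2\pmod 3$ for both, impossible for a knot. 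Second, note the logical order: the paper's Lemma \ref{prop:lengths} uses Proposition \ref{prop:4.3} together with Proposition \ref{prop:3.8} to pin down the residues of the two lengths, whereas you run the implication in reverse; since Lemma 3.10 of \cite{KosPec4} is proved there independently of their Proposition 4.3, your derivation is not circular, but built solely from statements appearing in this paper it would be. One minor slip: $\gcd(\alpha,\beta)=1$ is part of the normalization of the fraction, not a consequence of $\alpha$ being odd.
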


At last we arrive at our first theorem.

\begin{theorem}
\label{thm:noMoves}
For any knot $K$, we have
$$P(T^{(n)}_K|\not\exists\Int^{(n)}, \not\exists\Ext^{(n)})=0$$
for all $n$ except for $\ell_0$ and $\ell_1$.  In these cases, we have
$$P(T^{(n)}_K|\not\exists\Int^{(n)}, \not\exists\Ext^{(n)})=%\frac{2}{2^n} \text{ or } \frac{4}{2^n}.
\begin{cases}
\displaystyle\frac{2}{2^n} & \text{ when $r(\ell)=2$}\\
& \\
\displaystyle\frac{4}{2^n} & \text{ when $r(\ell)=4$},
\end{cases}
$$
where $r(\ell)$ is the number of ways of to write $K$ in the length $\ell$.

In particular, this means that the limit as $n$ approaches infinity is also zero.
\end{theorem}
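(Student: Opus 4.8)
The plan is to convert the probabilistic statement into a counting problem about $1$-regular continued fractions and then enumerate the fully reduced representatives of $K$. Since the displayed value $\frac{r(\ell)}{2^n}$ carries $2^n$ in the denominator, I read the left-hand side as the probability that a uniformly random string of length $n$ both yields $K$ and admits neither an internal nor an external reduction move; this probability is therefore $2^{-n}$ times the number of such strings, and the entire theorem reduces to counting the fully reduced strings of length $n$ that represent $K$ (up to mirror image). Thus the two assertions to establish are: the support of $n$ is exactly $\{\ell_0,\ell_1\}$, and the count there is $r(\ell)\in\{2,4\}$.

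First I would identify precisely which strings are being counted. By Remarks \ref{rem:1regular} and \ref{rem:alphabeta1}, a string admits no internal reduction move exactly when property (iii) of $1$-regularity holds, and it admits no external reduction move exactly when property (ii) holds at the right end together with the left-end condition; translating the crossing signs of Figure \ref{fig:crossings} into continued-fraction entries $a_i=\pm1$ (note $a_i=(-1)^{i+1}s_i$), such strings correspond exactly to $1$-regular continued fractions. By the uniqueness Theorem \ref{thm:1regular}, after the normalization $a_1=1$ these are in bijection with the reduced rationals $\frac{\alpha}{\beta}>1$, and dropping the normalization doubles the supply through the sign-reversal (negation) symmetry.

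Next I would enumerate, for a fixed $K=\frac{\alpha}{\beta}$ with $0<\beta<\alpha$, all fully reduced fractions representing $K$ or its mirror. Schubert's Theorem \ref{thm:schubert}, the Palindrome Theorem \ref{thm:palindrome}, and the mirror rule $\frac{\alpha}{\beta}\mapsto\frac{\alpha}{-\beta}$ assemble precisely into the set $\mathcal{F}(K)$ of Equation \ref{set:S}. Invoking Proposition \ref{prop:3.8}, I obtain $\ell(\frac{\alpha}{\beta'})=\ell(\frac{\alpha}{\beta})$, so a fraction and its Schubert partner share a length, while $\ell(\frac{\alpha}{\alpha-\beta})+\ell(\frac{\alpha}{\beta})=3N-2$ with $N=cn(K)$ places the mirror representative at the complementary length. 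Hence every fully reduced representative of $K$ has length either $\ell_0:=\ell(\frac{\alpha}{\beta})$ or $\ell_1:=3N-2-\ell_0$, so for every other $n$ there is no such string and the probability is $0$. In particular, because $\ell_0,\ell_1\le 3N-2$ depend only on $K$, the probability vanishes for all $n>3N-2$, which yields the limit-zero conclusion immediately.

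Finally I would produce the constants $2$ and $4$ by realizing the symmetries as a group action on $\{+,-\}^n$: reversal $\rho$ effects the passage to the Schubert partner (Palindrome Theorem) and negation $\nu$ effects mirroring, with $\rho^2=\nu^2=\mathrm{id}$ and $\rho\nu=\nu\rho$, so $\langle\rho,\nu\rangle$ is a Klein four-group preserving length. At each admissible length the fully reduced strings realizing $K$-up-to-mirror form a single orbit (namely the one carrying $\mathcal{F}(K)$), whose size is $r(\ell)$; since $(\alpha-\beta)^2\equiv\beta^2\pmod\alpha$, the same degeneracy governs both lengths, so $r(\ell_0)=r(\ell_1)$. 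Generically the orbit has size $4$, giving $\frac{4}{2^n}$; it collapses to size $2$ exactly when $\rho$ or $\rho\nu$ fixes the string, i.e. when $\beta^2\equiv\pm1\pmod\alpha$ (so $\beta\equiv\beta^{-1}$ or $\beta\equiv-\beta^{-1}$), giving $\frac{2}{2^n}$. I expect the genuine difficulty to lie in this bookkeeping: one must pin down the parity-dependent sign in the dictionary between billiard strings and signed continued fractions (reversal introduces a global sign when $n$ is even), check that distinct elements of $\mathcal{F}(K)$ yield distinct strings outside the stated degenerate cases, and confirm that the two lengths $\ell_0,\ell_1$ together with this orbit count exhaust all contributions so that no knot is under- or over-counted.
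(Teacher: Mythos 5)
Your route is essentially the paper's own. The paper proves this theorem via Lemma \ref{prop:lengths}, which follows exactly your chain: strings with no internal or external reduction moves are identified with $1$-regular continued fractions normalized at the left end (Remarks \ref{rem:1regular} and \ref{rem:alphabeta1}); Schubert's Theorem \ref{thm:schubert}, the Palindrome Theorem \ref{thm:palindrome}, and mirroring assemble the set $\mathcal{F}(K)$ of Equation \ref{set:S}; Proposition \ref{prop:3.8} gives $\ell(\frac{\alpha}{\beta'})=\ell(\frac{\alpha}{\beta})$ and $\ell(\frac{\alpha}{\beta})+\ell(\frac{\alpha}{\alpha-\beta})=3N-2$, pinning the support to two lengths; and uniqueness (Theorem \ref{thm:1regular}) turns fractions into strings bijectively, so the count at each admissible length is $r(\ell)\in\{2,4\}$. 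Your Klein four-group framing is a cosmetic repackaging of the paper's count over $\mathcal{F}'(K)$, and your degeneracy condition $\beta^2\equiv\pm1\pmod{\alpha}$ is precisely the paper's ``some fractions coincide by the Palindrome Theorem'' case giving $r(\ell)=2$.

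There are, however, two concrete loose ends. First, you never show $\ell_0\neq\ell_1$: from $\ell_0+\ell_1=3N-2$ alone the two lengths could coincide when $N$ is even, which would double the count at that single $n$ and break the stated values $\frac{2}{2^n}$ and $\frac{4}{2^n}$. The paper closes this by invoking Proposition \ref{prop:4.3} (one representative has length strictly less than $\frac{3}{2}N-1=\frac{3N-2}{2}$) together with the observation that no reduced length can be $\equiv 2\pmod 3$, since such strings yield two-component links rather than knots; this is also what forces the residues $\ell_0\equiv 0$ and $\ell_1\equiv 1\pmod 3$ that the theorem's labels $\ell_0,\ell_1$ actually refer to, and which guarantee both lengths occur in the model (where $3\nmid(n+1)$) --- facts your argument never recovers. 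Second, your fraction machinery does not cover the unknot (Proposition \ref{prop:3.8} requires $\alpha>\beta>0$); the paper handles it separately, with $\ell_0=0$ giving the single empty word and $\ell_1=1$ giving the two words $+$ and $-$, and it also dispenses explicitly with knots of bridge index at least $3$, which cannot appear in any $T(3,b)$. With these points patched, your proof matches the paper's.
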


We reiterate that a stronger result appears in Corollary \ref{cor:noint}. The bulk of the proof of the present result relies on the following lemma.

\begin{lemma}
\label{prop:lengths}
Let $K$ be a knot with bridge index at most 2, written as some Chebyshev ($|a_i|=1$) continued fraction expansion with no reduction moves.  Then there are exactly two reduced lengths $\ell$ for $K$.  Furthermore, these lengths are $\ell_0\equiv 0 \mod 3$ and $\ell_1\equiv 1\mod 3$.
\end{lemma}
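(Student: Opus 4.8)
The plan is to pass from the knot $K$ to the rational numbers that represent it, count the reduced lengths attached to those representatives, and then read off their residues modulo $3$. Since $K$ has bridge index at most $2$, write it as $\frac{\alpha}{\beta}$ with $0<\beta<\alpha$ (using Remark \ref{rem:alphabeta1} to normalize $|\beta|\le\alpha$); here $\alpha$ is odd, since $K$ is a genuine knot rather than a two-component link. By Schubert's Theorem \ref{thm:schubert} the positive rationals $>1$ representing $K$ are exactly $\frac{\alpha}{\beta}$ and $\frac{\alpha}{\beta^{-1}}$ (with $\beta^{-1}$ reduced mod $\alpha$ into $(0,\alpha)$), while the mirror representatives are $\frac{\alpha}{\alpha-\beta}$ and $\frac{\alpha}{\alpha-\beta^{-1}}$. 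These (at most) four fractions are precisely the ones whose $1$-regular expansions give Chebyshev diagrams with no reduction moves, so the reduced lengths attached to $K$ lie among $\ell(\frac{\alpha}{\beta})$, $\ell(\frac{\alpha}{\beta^{-1}})$, $\ell(\frac{\alpha}{\alpha-\beta})$, and $\ell(\frac{\alpha}{\alpha-\beta^{-1}})$.

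The next step is to collapse these four values to two using Proposition \ref{prop:3.8}. Set $A=\ell(\frac{\alpha}{\beta})$ and $N=cn(K)$. The first identity of Proposition \ref{prop:3.8}, applied to each of $\beta$ and $\beta^{-1}$ (the crossing number $N$ is a knot invariant, hence common to all four representatives), gives $\ell(\frac{\alpha}{\alpha-\gamma})=3N-2-\ell(\frac{\alpha}{\gamma})$. The second identity, $\ell(\frac{\alpha}{\beta^{*}})=\ell(\frac{\alpha}{\beta})$ with $\beta\beta^{*}\equiv(-1)^{N-1}\bmod\alpha$, identifies $\beta^{-1}$ (when $N$ is odd) or $\alpha-\beta^{-1}$ (when $N$ is even) as a length-$A$ partner of $\beta$. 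A short check on the parity of $N$ then shows in either case that the four lengths reduce to the single pair $\{A,\ 3N-2-A\}$.

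It remains to show these two lengths are distinct and to compute their residues. Their sum is $A+(3N-2-A)=3N-2\equiv 1\pmod 3$. Because $K$ is a knot and not a two-component link, each reduced length $\ell$ produces a billiard table diagram $T(3,\ell+1)$ with a single component, so $3\nmid(\ell+1)$, that is $\ell\not\equiv 2\pmod 3$; hence each of the two lengths lies in $\{0,1\}\pmod 3$. If they were equal we would have $2A=3N-2$, forcing $A\equiv 2\pmod 3$, which is excluded; so the two lengths are distinct, and being two elements of $\{0,1\}\pmod 3$ whose sum is $1\pmod 3$, they must be exactly $\ell_0\equiv 0$ and $\ell_1\equiv 1\pmod 3$.

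The main obstacle I expect is the bookkeeping in the second paragraph: verifying that the four representatives really do collapse to exactly two lengths regardless of the parity of $N$, and being careful that the representatives relevant to reduced Chebyshev diagrams are the four positive rationals above. In particular one must keep straight the distinction between the mirror taken as the negative fraction $\frac{\alpha}{-\beta}$ of $\mathcal{F}(K)$, whose length equals $A$ by the note in Definition \ref{def:length1}, and the mirror taken as the positive representative $\frac{\alpha}{\alpha-\beta}$, whose length is the complementary value $3N-2-A$ governing the billiard diagram. Once that distinction is pinned down, the collapse to two values and the residue computation are immediate.
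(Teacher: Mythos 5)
Your proposal is correct and follows essentially the same route as the paper: normalize via Remark \ref{rem:alphabeta1}, reduce the Schubert/mirror representatives to the pair $\left\{\frac{\alpha}{\beta},\frac{\alpha}{\alpha-\beta}\right\}$ using both identities of Proposition \ref{prop:3.8}, then combine the sum $3N-2\equiv 1\pmod 3$ with the observation that a length $\equiv 2\pmod 3$ would give a two-component link. The only (harmless) deviation is that you prove the two lengths are distinct by noting $2A=3N-2$ would force $A\equiv 2\pmod 3$, whereas the paper gets distinctness from Proposition \ref{prop:4.3}, which bounds one length by $\frac{3}{2}N-1$; your version is marginally more self-contained.
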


\begin{proof}
Recall from Equation \ref{set:S} the set $\mathcal{F}(K):=\{\frac{\alpha}{\pm\beta},\frac{\alpha}{\pm\beta'} |$ for all $\beta'$ satisfying $\beta\beta'\equiv1\mod\alpha\}$.  From Remark \ref{rem:alphabeta1}, we may fix a particular $0 <\beta'<\alpha$ such that $\beta\beta'\equiv(-1)^{n+1}$ mod $\alpha$ and need only consider the set $\mathcal{F}'(K):=\{\frac{\alpha}{\pm\beta},\frac{\alpha}{\pm\beta'},\frac{\alpha}{\pm(\alpha-\beta)},\frac{\alpha}{\pm(\alpha-\beta')}\}$ of size eight.

Also by Definition \ref{def:length1} we need not distinguish mirrors $\frac{\alpha}{\pm\beta}$, and so we consider the set\\ $\{\frac{\alpha}{\beta},\frac{\alpha}{\beta'},\frac{\alpha}{\alpha-\beta},\frac{\alpha}{\alpha-\beta'}\}$ of size four.

Let $N=cn(\frac{\alpha}{\beta})$ be the crossing number of $K$, and note that $(\alpha-\beta)^{-1}$ can be written as $(\alpha-\beta')$ for the appropriate $\beta'$ as above. Then by Proposition \ref{prop:3.8} \cite[Proposition 3.8]{KosPec4}, we need not distinguish inverses since $\ell(\frac{\alpha}{(-1)^{N-1}\beta^{-1}})=\ell(\frac{\alpha}{\beta})$, and so we consider the set $\{\frac{\alpha}{\beta},\frac{\alpha}{\alpha-\beta}\}$ of size two.

Following the argument above starting with the set $\mathcal{F}'(K)$, we have $r(\ell)=4$.  This holds except when some of the fractions in this set coincide due to the Palindrome Theorem \ref{thm:palindrome}; in this case we have $r(\ell)=2$.

By Proposition \ref{prop:4.3} \cite[Proposition 4.3 (2.)]{KosPec4}, one of these has length less than $\frac{3}{2}N-1$.  Again by Proposition \ref{prop:3.8} \cite[Proposition 3.8]{KosPec4}, we have $\ell(\frac{\alpha}{\beta})+\ell(\frac{\alpha}{\alpha-\beta})=3N-2$.  Since the two lengths must sum to $1 \mod 3$, and since neither length can be $2\mod 3$ since otherwise it would give a two-component link and not a knot, exactly one of these lengths must be $0\mod 3$ and the other $1\mod 3$, as desired.
\end{proof}

%\todo[inline]{Include \cite[Proposition 4.3 (2.)]{KosPec4}?}

The proof of the theorem now follows directly.

\begin{proof}
First suppose the knot $K$ has bridge index of three or greater; then it can appear in no $T(3,b)$ by the construction of the model.

Now suppose that the knot $K$ has bridge index at most 2.  Then by Lemma \ref{prop:lengths}, there are just two reduced lengths $\ell_0$ and $\ell_1$ for $K$, and each of these lengths has at most four rationals associated to it, as described in the proof above.

For $K$ a 2-bridge knot, by Theorem \ref{thm:1regular}, there are unique continued fractions associated with these four rational numbers, and thus there are at most four Chebyshev billiard table diagrams with no reduction moves associated  with $K$.

For $K$ the unknot, the lengths are $\ell_0=0$ giving a single empty word and $\ell_1=1$ giving the two words $+$ and $-$.
\end{proof}

\section{Main Definitions}
\label{sec:maindefs}
%\todo[inline]{Some introduction}
In this section, we set up some notation and definitions first and then prove general results that will prepare us to state and prove our Main Theorem \ref{thm:main} in the next Section \ref{sec:mainresults}.

\begin{definition}
\label{def:xy}
Let $\Int_j^{(n)}$ be the occurrence of an internal reduction move in positions $j$, $j+1$, and $j+2$ in a string of length $n$.

Let $S^{(n)}=S^{(n)}_K$ be the set $\{T_K^{(n)}\}$, and let us establish a descending filtration up to finite index of the set $S_1^{(n)}:= S^{(n)}\amalg S^{(n)}$ by $S_2^{(n)}:=S^{(n)}=:S_3^{(n)}$ and $S^{(n)}_i=\{T_K^{(n)},\not\exists \Int^{(n)}_j \text{ for }j< i-2\}$ for $4\leq i \leq n+1$.  Note that here $S^{(n)}_{i+1}\subseteq S^{(n)}_{i}$ for $i\leq n$.

Let $x_i^{(n)}:=|D_i^{(n)}|:=|S_i^{(n)}\setminus S_{i+1}^{(n)}|$ be the cardinality of the difference between these two sets for $1\leq i \leq n$, and introduce $y_i^{(n)}:=x_i^{(n)}-x_{i+1}^{(n)}$ for $1 \leq i \leq n-1$.  These are defined for some fixed $K$, which is assumed to be understood from now on.
\end{definition}

%{\red
%}
The above definitions are motivated by the following lemma.
\begin{lemma}
\label{lem:xs}
Following the notation above, $x^{(n)}_i=|S^{(n-3)}_{i-2}|$ for $3\leq i\leq n$ and $n\geq 4$.
\end{lemma}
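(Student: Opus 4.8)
The plan is to establish, for each fixed $i$ with $3 \le i \le n$, an explicit bijection between the difference set $D_i^{(n)} = S_i^{(n)} \setminus S_{i+1}^{(n)}$ and the filtration set $S_{i-2}^{(n-3)}$, and then simply read off the equality of cardinalities. First I would unwind Definition \ref{def:xy} to describe $D_i^{(n)}$ concretely: a sign word $s \in \{+,-\}^n$ lies in $D_i^{(n)}$ exactly when it produces $K$, admits no internal reduction $\Int_j^{(n)}$ for $j \le i-3$, but does admit $\Int_{i-2}^{(n)}$. In words, $s$ gives $K$ and its \emph{first} internal reduction occurs at position $p := i-2$. Hence the letters in positions $p, p+1, p+2$ agree, say all equal to some $a$, while to forbid $\Int_{p-1}^{(n)}$ the letter in position $p-1$ must equal $\bar a$; this last constraint is vacuous only when $p=1$, that is $i=3$.

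The bijection is the reduction move of Definition \ref{def:int}: send $s$ to the word $s'$ of length $n-3$ obtained by deleting the triple $aaa$ in positions $p, p+1, p+2$. By Proposition \ref{prop:int} this deletion does not change the knot type, so $s$ gives $K$ if and only if $s'$ does; moreover the proof of that proposition shows the surviving crossings retain their signs, so $s'$ is literally the concatenation of the unaffected letters. I would then verify $s' \in S_{i-2}^{(n-3)}$: the first $p-1$ letters of $s'$ coincide with those of $s$, so any internal reduction of $s'$ beginning at a position $j$ with $j+2 \le p-1$, i.e. $j \le i-5$, would already be an internal reduction of $s$ in the same positions, which is excluded. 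This is precisely the defining condition of $S_{i-2}^{(n-3)}$ when $i-2 \ge 4$, and the small cases $i \in \{4,5\}$ match the special sets $S_2, S_3$ by direct inspection.

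For the inverse I would reinsert a triple at positions $p, p+1, p+2$. Given $s' \in S_{i-2}^{(n-3)}$ with $i \ge 4$, the sign of the triple is forced to be $\overline{s'_{p-1}}$: the inherited absence of early reductions in $s'$ rules out $\Int_j^{(n)}$ for $j \le p-3$, and this forced sign rules out $\Int_{p-2}^{(n)}$ and $\Int_{p-1}^{(n)}$, so that the reconstituted word has no internal reduction before position $p$ while the triple itself realizes $\Int_p^{(n)}$, placing it in $D_i^{(n)}$. This reinsertion is visibly inverse to the deletion, giving $x_i^{(n)} = |S_{i-2}^{(n-3)}|$ for $4 \le i \le n$. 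The one case needing separate treatment is $i=3$, where $p=1$ and there is no letter in position $p-1$ to constrain $a$: both $a=+$ and $a=-$ yield valid preimages of the same $s' \in S^{(n-3)}$, so every fiber has size two. This is exactly why the filtration is set up with the doubled level $S_1^{(n-3)} = S^{(n-3)} \amalg S^{(n-3)}$, and the formula $x_3^{(n)} = |S_1^{(n-3)}| = 2|S^{(n-3)}|$ then drops out uniformly.

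The main obstacle I anticipate is bookkeeping at the boundary of the deleted block: correctly pinning down that the letter in position $p-1$ forces the sign of the inserted or deleted triple, and that no internal reduction is accidentally created or destroyed at positions $p-2$ and $p-1$, together with confirming that the low-index degenerate sets $S_1, S_2, S_3$ (and the doubling at level one) align with the instances $i = 3, 4, 5$. None of this is deep, but the off-by-one indexing between the length-$n$ and length-$(n-3)$ filtrations is where an error would most easily slip in.
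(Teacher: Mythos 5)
Your proof is correct and takes essentially the same approach as the paper: both arguments identify $D_i^{(n)}$ with $S_{i-2}^{(n-3)}$ via the triple insertion/deletion correspondence, with the sign of the triple forced to be opposite to the adjacent letter (the paper writes the insertion as $s=w_1qw_2\mapsto s'=w_1\,q\,\bar{q}\,\bar{q}\,\bar{q}\,w_2$), and both treat $i=3$ separately, where the two possible signs of the prepended triple account for the doubled level $S_1^{(n-3)}=S^{(n-3)}\amalg S^{(n-3)}$. The only cosmetic difference is direction of presentation: you take deletion as the forward map and insertion as its inverse, while the paper does the reverse.
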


\begin{proof}
From Definition \ref{def:xy}, we see that
$$x_{i}^{(n)}=|D^{(n)}_i|=|\{T^{(n)}_K,\exists \Int^{(n)}_{i-2},\not\exists \Int^{(n)}_j\text{ for } j<i-2\}|\text{ for } 3\leq i\leq n\text{ and }n\geq 4.$$
The following idea for relating cardinalities of the two sets $D^{(n)}_i$ and $S^{(n-3)}_{i-2}$ will be used again, so we detail the steps for this time even though the method is not hard. We also remind the reader that  $S^{(n-3)}_{i-2}=\{T_K^{(n-3)},\not\exists \Int^{(n-3)}_j \text{ for }j< i-4\}$ for $6\leq i \leq n$, that  $S^{(n-3)}_{2}=S^{(n-3)}_{3}=S^{(n-3)}$ corresponding to $i=4,5$,  respectively, and that $S_1^{(n-3)}=S^{(n-3)}\amalg S^{(n-3)}$ corresponding to $i=3$.

Let $s\in S^{(n-3)}_{i-2}$ be arbitrary with $i\geq 4$. It can be written as
$s=w_1 q w_2,$
where  the word $w_1$ is of length $i-4$ and has no $\Int$ move anywhere, the letter $q$ is in the $(i-3)$rd position, and the word $w_2$ is the remaining part. We are interested in adding an $\Int$ move to $s$ so as to obtain different $s'\in D^{(n)}_i$. Clearly
$$s'=w_1 \,q\, \bar{q}\, \bar{q}\, \bar{q}\, w_2\in D^{(n)}_i,$$
where $\bar{q}$ is $q$'s  negative (or opposite). Note that there is exactly one way to do this because any element of $D^{(n)}_i$ should have an $\Int$ move starting at the $(i-2)$nd position and {\it no} such moves before it. Also, different $s\in S^{(n-3)}_{i-2}$ give rise to different $s'\in D^{(n)}_i$. Thus, $|D^{(n)}_i|\geq |S^{(n-3)}_{i-2}|$.

Conversely, on performing the $\Int^{(n)}_{i-2}$ move, it is straightforward to see that any arbitrary $s'\in D^{(n)}_i$ must be obtained from some $s\in S^{(n-3)}_{i-2}$ in this manner, that is, by the addition of an $\Int$ move.  We conclude that $|D^{(n)}_i|=|S^{(n-3)}_{i-2}|$.

When $i=3$,
$$x_{3}^{(n)}=|D^{(n)}_3|=|\{T^{(n)}_K,\exists \Int^{(n)}_{1}\}|\text{ and }S_1^{(n-3)}=S^{(n-3)}\amalg S^{(n-3)}.$$
Taking an arbitrary $s\in S^{(n-3)}$, it is easy to see that there are now two ways to add an $\Int$ move to the left end of $s$ to obtain different $s'\in D^{(n)}_3$. The argument for relating the cardinalities then proceeds exactly along the same lines as in the case detailed first. The lemma is thus fully proven.
\end{proof}

There are a few other basic properties that are also of interest, and we have collected them as the next block.

\begin{property}
\label{property:xy}
Recall that $x_i^{(n)}$ is defined for $i\leq n$ and that $y_i^{(n)}$ is defined for $i\leq n-1$.  For appropriate values of $n$ unless otherwise specified, we have

\begin{center}
\begin{tabular}{ll}
\multicolumn{2}{l}{(Sn) $|S_{n+1}^{(n)}|=x_{n+3}^{(n+3)}$ for $n\geq 1$}\\
(X1) $x_1^{(n)}=|S^{(n)}|$ 		& (Y1) $y_1^{(n)}=|S^{(n)}|$					 \\
(X2) $x_2^{(n)}=0$ 						& (Y2) $y_2^{(n)}=-2x_4^{(n)}$				 \\
(X3) $x_3^{(n)}=2x_4^{(n)}$ 	& (Y3) $y_3^{(n)}=x_4^{(n)}$ 					 \\
(X4) $x_4^{(n)}=x_5^{(n)}$		& (Y4) $y_4^{(n)}=0$ 		 
\end{tabular}
\end{center}
\end{property}

\begin{proof}
(X1), (X2), and (Y1) are direct consequences of Definition \ref{def:xy}.  Also, (Sn) follows directly from Lemma \ref{lem:xs}. From Lemma \ref{lem:xs} and Definition \ref{def:xy} again, we have that 
\begin{align*}
x^{(n)}_3 & =|S^{(n-3)}_1|=2|S^{(n-3)}|,\\
x^{(n)}_4 & =|S^{(n-3)}_2|=|S^{(n-3)}|,\text{ and}\\
x^{(n)}_5 & =|S^{(n-3)}_3|=|S^{(n-3)}|.
\end{align*}
(X3) and (X4) are thus done. Relying on Definition \ref{def:xy}, (Y2), (Y3), and (Y4) are also then proven.

Notice the implications between the properties: properties (X1) and (X2) together imply (Y1); properties (X2) and (X3) together imply (Y2); properties (X3) and (X4) together imply (Y3); and property (X4) implies (Y4).
\end{proof}

From Definition \ref{def:xy}, it is clear that for $1\leq i \leq n$, one can write
\begin{equation}
\label{eq:fact}
x_i^{(n)}=x_n^{(n)}+\sum_{j=i}^{n-1}y_j^{(n)}.
\end{equation}

We next observe that these also satisfy the following lemma, which is actually a simple corollary of Lemma \ref{lem:xs}. We state it as a separate lemma because it will be used repeatedly in the sequel in conjunction with Equation \ref{eq:fact}.
\begin{lemma}[$xy$ Lemma]
\label{lem:XY}
Following the notation above, $y_i^{(n)}=x_{i-2}^{(n-3)}$ for $3\leq i \leq n-1.$
\end{lemma}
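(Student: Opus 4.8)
The plan is to derive the identity directly from Lemma~\ref{lem:xs} together with the definition of $y_i^{(n)}$ as a difference, so the whole argument is a short chain of equalities rather than any new construction. First I would start from $y_i^{(n)}=x_i^{(n)}-x_{i+1}^{(n)}$, which holds by Definition~\ref{def:xy} for $1\le i\le n-1$. For the stated range $3\le i\le n-1$ both indices $i$ and $i+1$ lie in $[3,n]$, so Lemma~\ref{lem:xs} applies to each summand and rewrites the difference as
\[
y_i^{(n)}=\bigl|S^{(n-3)}_{i-2}\bigr|-\bigl|S^{(n-3)}_{i-1}\bigr|.
\]

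Next I would recognize the right-hand side as the cardinality of a set difference inside the filtration on $S^{(n-3)}$. Definition~\ref{def:xy} records the nesting $S^{(m)}_{j+1}\subseteq S^{(m)}_{j}$ for $j\le m$; taking $m=n-3$ and $j=i-2$ gives $S^{(n-3)}_{i-1}\subseteq S^{(n-3)}_{i-2}$, whence $\bigl|S^{(n-3)}_{i-2}\bigr|-\bigl|S^{(n-3)}_{i-1}\bigr|=\bigl|S^{(n-3)}_{i-2}\setminus S^{(n-3)}_{i-1}\bigr|$. By Definition~\ref{def:xy} this last quantity is exactly $x_{i-2}^{(n-3)}=|D^{(n-3)}_{i-2}|$, which closes the computation and yields the claim.

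The only point requiring care---and the closest thing to an obstacle---is the bookkeeping of index ranges, which I would verify at the outset so that each invoked result is genuinely in force. Applying Lemma~\ref{lem:xs} to $x_{i+1}^{(n)}$ needs $3\le i+1\le n$, i.e.\ $i\le n-1$; applying it to $x_i^{(n)}$ needs $3\le i$ and $n\ge4$, both consequences of $3\le i\le n-1$. The nesting $S^{(n-3)}_{i-1}\subseteq S^{(n-3)}_{i-2}$ needs $i-2\le n-3$, again $i\le n-1$, and the target $x_{i-2}^{(n-3)}$ is defined precisely for $1\le i-2\le n-3$, i.e.\ $3\le i\le n-1$, matching the hypothesis exactly. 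Once these are confirmed, nothing further is needed, and this is precisely the sense in which the $xy$ Lemma is a corollary of Lemma~\ref{lem:xs}.
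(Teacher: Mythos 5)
Your proposal is correct and follows essentially the same route as the paper's own proof: both expand $y_i^{(n)}=x_i^{(n)}-x_{i+1}^{(n)}$ via Definition~\ref{def:xy}, apply Lemma~\ref{lem:xs} to each term to get $\bigl|S^{(n-3)}_{i-2}\bigr|-\bigl|S^{(n-3)}_{i-1}\bigr|$, and identify this with $x_{i-2}^{(n-3)}$ from the definition. Your extra care with the index ranges and the filtration nesting only makes explicit what the paper leaves implicit.
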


\begin{proof}
From Definition \ref{def:xy}, $y^{(n)}_i=x^{(n)}_i-x^{(n)}_{i+1}$. Since $3\leq i \leq n-1$, thus requiring $n\geq 4$, we can use Lemma \ref{lem:xs} to conclude that $y_i^{(n)}$ is just $|S^{(n-3)}_{i-2}|-|S^{(n-3)}_{i-1}|$. Going back to Definition \ref{def:xy}, we see that the last expression is exactly $x^{(n-3)}_{i-2}$.
\end{proof}

Equation \ref{eq:fact} and the $xy$ Lemma \ref{lem:XY} are two results that we return to time and again to prove our main theorem. 
%\todo[inline]{Skipped this above proof, as well.}

Now that we have established this theory for $x_i^{(n)}$ for $n\geq 1$, we wish to extend it to all $i\leq n\in\mathbb{Z}$ in order to prove Lemma \ref{lem:binom}, which will then be used to prove the Main Theorem \ref{thm:main}.  %finally derive a closed form expression for the count of knots. 
  Extending the theory is done  iteratively, starting with $i=0$.

\begin{definition}
\label{def:i0}
For each $i\leq 0$, iteratively set $x_i^{(n)}:= y_{i+2}^{(n+3)}$ following the $xy$ Lemma \ref{lem:XY}.  Then set $y_i^{(n)}:= x_i^{(n)}-x_{i+1}^{(n)}$ as above.
\end{definition}

Note then, for example, that for $n=0$ we have $x_0^{(0)}=y^{(3)}_2=x_2^{(3)}-x_3^{(3)}=-x_3^{(3)}$, and for $n\geq 1$, we have $x_0^{(n)}=y^{(n+3)}_2=-2x_4^{(n+3)}$ by Property \ref{property:xy} (Y2).

\begin{remark}
\label{rem:xni}
 This definition is motivated by the fact that for $i\leq i'-1\leq 0$, the new $x_i^{(n)}$ can only be defined in terms of the $x_{i'}^{(n)}$ using the $xy$ Lemma \ref{lem:XY}, which is central to proving our Main Theorem \ref{thm:main}.  Thus these are uniquely defined.  The motivation for defining the $y_i^{(n)}$ as above is so that we have Equation \ref{eq:fact}, which is also an important ingredient  in proving our Main Theorem \ref{thm:main}.
\end{remark}

\section{Main Results}
\label{sec:mainresults}

With all the notations and definitions out of the way, in the present section we turn to deriving the main result. The first step in that direction is the lemma given below.

\begin{lemma}
\label{lem:binom}
For $i\leq n\in\mathbb{Z}$, the $x_i^{(n)}$ are given by
\begin{equation*}
x_i^{(n)}=\sum_{j=0}^{n-i} \binom{n-i}{j} x_{n-3j}^{(n-3j)}.
\end{equation*}
\end{lemma}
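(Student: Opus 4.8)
The plan is to prove the identity by induction on $m := n-i \ge 0$, driven by a single three-term recurrence. The key observation is that combining the two defining relations for the $y$'s yields, for every $i \le n-1$,
\[
x_i^{(n)} = x_{i+1}^{(n)} + x_{i-2}^{(n-3)}.
\]
Indeed, $y_i^{(n)} = x_i^{(n)} - x_{i+1}^{(n)}$ holds by Definition \ref{def:xy} for $1\le i\le n-1$ and by Definition \ref{def:i0} for $i\le 0$, while $y_i^{(n)} = x_{i-2}^{(n-3)}$ holds by the $xy$ Lemma \ref{lem:XY} for $3\le i\le n-1$ and, for $i\le 2$, because Definition \ref{def:i0} sets $x_i^{(n)} := y_{i+2}^{(n+3)}$, which is exactly $y_{i'}^{(n')} = x_{i'-2}^{(n'-3)}$ with $i'=i+2$. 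Subtracting these two expressions for $y_i^{(n)}$ gives the displayed recurrence uniformly for all $i\le n-1$. The crucial feature is that both terms on the right share the value $m-1$ of the index difference: $n-(i+1)=m-1$ and $(n-3)-(i-2)=n-i-1=m-1$. This is precisely what lets an induction on $m$ close in a single step.

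For the base case $m=0$, i.e.\ $i=n$, both sides equal $x_n^{(n)}$. For the inductive step, fix $i,n$ with $n-i=m\ge 1$ and assume the formula for every index pair with difference $m-1$. Apply the recurrence and then the hypothesis to each summand. For the first term,
\[
x_{i+1}^{(n)} = \sum_{j=0}^{m-1}\binom{m-1}{j}\, x_{n-3j}^{(n-3j)}.
\]
For the second term the ambient top index is $n-3$, so
\[
x_{i-2}^{(n-3)} = \sum_{j=0}^{m-1}\binom{m-1}{j}\, x_{n-3(j+1)}^{(n-3(j+1))} = \sum_{j=1}^{m}\binom{m-1}{j-1}\, x_{n-3j}^{(n-3j)}
\]
after reindexing $j\mapsto j-1$. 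Adding the two sums and collecting the coefficient of each $x_{n-3j}^{(n-3j)}$, Pascal's rule $\binom{m-1}{j}+\binom{m-1}{j-1}=\binom{m}{j}$ (with the endpoints $j=0$ and $j=m$ giving $\binom{m-1}{0}=\binom{m}{0}$ and $\binom{m-1}{m-1}=\binom{m}{m}$) produces exactly $\sum_{j=0}^{m}\binom{m}{j}\, x_{n-3j}^{(n-3j)}$, completing the induction.

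I expect the only real subtlety, rather than a genuine obstacle, to be tracking the range of validity of the recurrence across the extension to nonpositive indices. One must confirm that Definition \ref{def:i0} is engineered precisely so that both $y_i^{(n)}=x_i^{(n)}-x_{i+1}^{(n)}$ and $y_i^{(n)}=x_{i-2}^{(n-3)}$ persist for $i\le 0$ (and $i\le 2$, respectively), as already foreshadowed in Remark \ref{rem:xni}; once this is in hand the recurrence is valid uniformly for all $i\le n-1$ and the induction requires no case distinctions. The combinatorial heart of the argument, the application of Pascal's identity, is entirely routine.
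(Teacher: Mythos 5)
Your proof is correct, and it takes a recognizably different route through the same two ingredients. The paper also inducts on $d:=n-i$, but its inductive step unrolls the difference relation completely via Equation \ref{eq:fact}, writing $x_i^{(n)}=x_n^{(n)}+\sum_{j=i}^{n-1}y_j^{(n)}$, converts each $y_j^{(n)}$ to $x_{j-2}^{(n-3)}$ by the $xy$ Lemma \ref{lem:XY}, applies the induction hypothesis to every term of that sum, interchanges the double summation, and collapses the inner sum with the hockey-stick identity $\sum_{j=i}^{n-k-1}\binom{n-j-1}{k}=\binom{n-i}{k+1}$. You instead peel off a single step: the three-term recurrence $x_i^{(n)}=x_{i+1}^{(n)}+x_{i-2}^{(n-3)}$, in which both right-hand terms have index difference exactly $m-1$, so one application of the hypothesis plus Pascal's rule closes the induction. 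What your version buys is a more elementary and more localized argument --- no double sum, no summation identity beyond Pascal, and the boundary bookkeeping is concentrated in verifying that the two expressions for $y_i^{(n)}$ persist for all $i\le n-1$ after the extension of Definition \ref{def:i0}. Your explicit check that $y_i^{(n)}=x_{i-2}^{(n-3)}$ holds for $i\le 2$ precisely because Definition \ref{def:i0} sets $x_i^{(n)}:=y_{i+2}^{(n+3)}$ is exactly the point the paper compresses into the phrase ``along with their extensions from Definition \ref{def:i0},'' so you have if anything been more careful on the one genuinely delicate issue. What the paper's version buys in exchange is reuse of Equation \ref{eq:fact}, a tool it invokes repeatedly elsewhere (e.g.\ in motivating Definition \ref{def:i0} via Remark \ref{rem:xni}), whereas your recurrence appears nowhere else in the paper. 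Both proofs are sound; yours is arguably the cleaner derivation of this particular lemma.
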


\begin{proof}
We use induction on $d:= n-i$. Since $\binom{0}{0}=1$, the base case $d=0$ is trivial. Assume the result holds for all positive integers up to $ d-1$. Let us verify that the equation is correct for $ d$ as well. We have
$$x^{(n)}_i=x^{(n)}_n+\sum_{j=i}^{n-1}y^{(n)}_j=x^{(n)}_n+\sum_{j=i}^{n-1}x^{(n-3)}_{j-2},$$
where we used Equation \ref{eq:fact} in the first equality and the $xy$ Lemma \ref{lem:XY} in the second, along with their extensions from  Definition \ref{def:i0}.

Since $(n-3)-(j-2)\leq  d-1$, we have by the induction hypothesis that
$$x^{(n)}_i = x^{(n)}_n+\sum_{j=i}^{n-1}\sum_{k=0}^{n-j-1}\binom{n-j-1}{k}x^{(n-3k)}_{n-3k}.$$
Exchanging the order of the summations, it is easily seen that we have
\begin{align*}
x^{(n)}_i &= x^{(n)}_n+\sum_{k=0}^{n-i-1}\sum_{j=i}^{n-k-1}\binom{n-j-1}{k}x^{(n-3k)}_{n-3k}\\
&= x^{(n)}_n+\sum_{k=0}^{n-i-1}\binom{n-i}{k+1}x^{(n-3k)}_{n-3k}=\sum_{k=0}^{n-i} \binom{n-i}{k} x_{n-3k}^{(n-3k)}.
\end{align*}
\end{proof}

Finally we are ready to state and prove the Main Theorem \ref{thm:main}, which has three parts. The first gives a simple closed form expression for the probability of any knot $K$ occurring as a string of length $n$ in terms of  the $x^{(i)}_i$. Formulas for these $x^{(i)}_i$ appear as the second and third parts.

\begin{theorem}
\label{thm:main} (Main Theorem)
\begin{enumerate}
\item The probability of a given knot $K$ appearing in $T(3,n+1)$ is
\begin{equation}
\frac{1}{2^n}\left[x_{n+3}^{(n+3)}+\sum_{i=0}^{n-6} \left[\binom{n-5}{i+1}+4\binom{n-5}{i}\right] x_{n-3i}^{(n-3i)} + 4x_{-2n+15}^{(-2n+15)}\right].
\label{eq:maineq}
\end{equation}

\item Let $K$ be a knot with reduced lengths $(\ell=)$ $\ell_0\equiv 0$ and $\ell_1\equiv 1 \mod 3$ and let $r(\ell)$ be the number of ways to write $K$ in that length.  Let $U$ denote the unknot.  Then

\begin{equation}
\label{eq:2}
x_{3m+\ell}^{(3m+\ell)}=
\begin{cases}
%2 & \text{for $K=U$ with $m=1$, $\ell=\ell_0=0$}\\
4(m-1)+2& \text{for $K=U$ with $m\geq1$ and $\ell=\ell_1=1$}\\
%\#(\ell)&\text{for $K\neq U$ with $m=1$}\\
4(m-1)r(\ell)  &	\text{for $K=U$ with $m\geq 2$ and $\ell=\ell_0=0$ and}\\
								&	\text{for $K\neq U$ with $m\geq 2$.}
\end{cases}
\end{equation}
%for $n\geq 4$ with base case $x^3_3=2$ and
As base cases we have:  
\begin{itemize}
	\item[(i.)] $x^{(1)}_1=x^{(4)}_4$ for any knot $K$;
	\item[(ii.)]  $x^{(3)}_3 =x^{(4)}_4=2$ for $K=U$;
	\item[(iii.)]  $x_{3+\ell}^{(3+\ell)}=r(\ell)$ for $K\neq U$; and
	\item[(iv.)]  $x^{(i)}_{i}=0$ for $K\neq U$ when $3\leq i\leq \ell$.
\end{itemize}

\item  For $6\leq n\in\mathbb{Z}^+$ we have
$$x_{2(6-n)}^{(2(6-n))} = \sum_{j=1}^{n-5} \left[ - \binom{n-5}{j-1} \right] x_{n-3j}^{(n-3j)}.$$

For $5\leq n\in\mathbb{Z}^+$ with $\binom{k}{-1}:=0$ for $k\geq 0$ we have
$$x_{2(4-n)+1}^{(2(4-n)+1)} = \sum_{j=0}^{n-4} \left[ \binom{n-4}{j} - \binom{n-4}{j-1} \right] x_{n-3j}^{(n-3j)}.$$

\end{enumerate}
\end{theorem}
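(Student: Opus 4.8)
The plan is to derive both formulas from the single binomial identity of Lemma~\ref{lem:binom} together with the vanishing $x_2^{(m)}=0$ supplied by Property~\ref{property:xy}~(X2); no new machinery should be needed, only careful reindexing. For the first (even-index) formula, I would apply Lemma~\ref{lem:binom} to $x_2^{(n-3)}$. Since the top index is $n-3$ and the bottom index is $2$, this reads
\begin{equation*}
0=x_2^{(n-3)}=\sum_{j=0}^{n-5}\binom{n-5}{j}\,x_{(n-3)-3j}^{((n-3)-3j)}.
\end{equation*}
Shifting the summation variable by one so that the running index becomes $n-3j$, and peeling off the extreme term (at $j=n-4$), whose index is exactly $(n-3)-3(n-4)=2(6-n)$ and whose coefficient $\binom{n-5}{n-5}=1$, isolates $x_{2(6-n)}^{(2(6-n))}$ while the remaining terms carry coefficients $-\binom{n-5}{j-1}$. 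This is precisely the first displayed identity, valid for $n\ge 6$ so that the relation $x_2^{(n-3)}=0$ is available.

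The second (odd-index) formula cannot be obtained the same way, because the extreme term of any relation $x_2^{(m)}=0$ has the even index $-2m+6$, so an odd index such as $2(4-n)+1=9-2n$ never sits at the end of such a relation. The trick I would use is to split the stated coefficient $\binom{n-4}{j}-\binom{n-4}{j-1}$ into two sums. The first,
\begin{equation*}
\sum_{j=0}^{n-4}\binom{n-4}{j}\,x_{n-3j}^{(n-3j)},
\end{equation*}
is, by Lemma~\ref{lem:binom}, exactly $x_4^{(n)}$ (top index $n$, bottom index $4$). In the second, reindexing $j\mapsto j+1$ turns $\binom{n-4}{j-1}$ into $\binom{n-4}{j}$ and the index into $(n-3)-3j$, producing the partial sum $\sum_{j=0}^{n-5}\binom{n-4}{j}\,x_{(n-3)-3j}^{((n-3)-3j)}$, which is the binomial expansion of $x_1^{(n-3)}$ missing only its final term at $j=n-4$; that missing term has index $(n-3)-3(n-4)=9-2n$ and coefficient $1$, i.e.\ it is exactly the target $x_{9-2n}^{(9-2n)}$. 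Hence the proposed right-hand side equals $x_4^{(n)}-\bigl(x_1^{(n-3)}-x_{9-2n}^{(9-2n)}\bigr)$, and the formula reduces to the single claim $x_4^{(n)}=x_1^{(n-3)}$. This last equality follows immediately from Lemma~\ref{lem:xs}, which gives $x_4^{(n)}=|S_2^{(n-3)}|=|S^{(n-3)}|$, together with Property~\ref{property:xy}~(X1), which gives $x_1^{(n-3)}=|S^{(n-3)}|$.

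The main obstacle is exactly the odd case: because no ``clean zero'' relation terminates at an odd index, one must recognize the difference of binomial coefficients as the signature of the collapse $x_4^{(n)}-x_1^{(n-3)}=0$, rather than trying to isolate a single term as in the even case. The bookkeeping needed to confirm — over the relevant ranges $n\ge 6$ and $n\ge 5$ — that every invoked instance of Lemma~\ref{lem:binom} satisfies $i\le n$, that the boundary convention $\binom{k}{-1}:=0$ correctly handles the $j=0$ term, and that the peeled-off terms land at precisely the claimed indices, is routine but should be checked with care at the smallest values (for example $n=5$, where the coefficient $\binom{1}{1}-\binom{1}{0}=0$ conveniently annihilates the spurious even term).
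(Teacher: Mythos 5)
Your proposal covers only Part (3) of the theorem, and that is the decisive gap. The statement has three parts: Part (1), the probability formula (\ref{eq:maineq}); Part (2), the closed forms and base cases for the $x_{3m+\ell}^{(3m+\ell)}$; and Part (3), the two recursions you derive. Parts (1) and (2) require arguments of a completely different character that your plan never touches. Part (1) rests on the partition of $S^{(n)}$ according to where an internal reduction move first occurs (or does not occur), giving $|S^{(n)}|=|S_{n+1}^{(n)}|+\sum_{i=3}^{n}x_i^{(n)}$, after which Property \ref{property:xy} (Sn), (X3), (X4), Lemma \ref{lem:binom}, and an interchange of summations yield the stated coefficients $\binom{n-5}{i+1}+4\binom{n-5}{i}$. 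Part (2) is the combinatorial core of the paper: one counts the words $e^{m_1}wf^{m_2}$ (and sign variants), where $e=++-$ and $f=-++$, that represent $K$ in length $3(m-1)+\ell$ with no internal moves, obtaining $4(m-1)r(\ell)$, and then handles the unknot separately because of introduced $\Int$ moves and double counting (e.g.\ $ew=wf$ when $w=-$), which produces the $4(m-1)+2$ case; the base cases (i.)--(iv.) need their own small verifications such as $x_3^{(3)}=|\{+++,---\}|=2$. Without Part (2), the quantities $x_{n-3j}^{(n-3j)}$ appearing in your own formulas are not pinned down, so the theorem as stated remains essentially unproved.

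What you do prove is correct, and it is the paper's Part (3) argument in light disguise. The paper expands $0=x_{i+1}^{(n)}-x_i^{(n)}+y_i^{(n)}$ via Lemma \ref{lem:binom}, applies Pascal's rule, and specializes to $i=4$ (where $y_4^{(n)}=0$) and $i=3$ (where $y_3^{(n)}=x_4^{(n)}$ by (Y3)). Your even case instead expands $x_2^{(n-3)}=0$ directly; since $y_4^{(n)}=x_2^{(n-3)}$ by the $xy$ Lemma \ref{lem:XY}, this is the same vanishing, with the Pascal step traded for a reindexing. Your odd case verifies the claimed identity by collapsing it to $x_4^{(n)}=x_1^{(n-3)}$, which indeed follows from Lemma \ref{lem:xs} and (X1), and which is exactly the paper's ingredient $y_3^{(n)}=x_4^{(n)}$ read through the $xy$ Lemma; your use of Lemma \ref{lem:binom} for terms of negative index is legitimate because Definition \ref{def:i0} extends the lemma to all $i\leq n\in\mathbb{Z}$. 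One arithmetic slip to fix: in the even case the extreme term of $x_2^{(n-3)}=\sum_{j=0}^{n-5}\binom{n-5}{j}x_{(n-3)-3j}^{((n-3)-3j)}$ sits at $j=n-5$, not $j=n-4$, and your displayed computation $(n-3)-3(n-4)=2(6-n)$ is false (it equals $9-2n$); the correct evaluation $(n-3)-3(n-5)=2(6-n)$ with coefficient $\binom{n-5}{n-5}=1$ gives the index you wanted, so the final formula survives, but as written the step does not check out.
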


\begin{proof}
%\begin{enumerate}
{\it Part (1)}:
We have: 
%\begin{align*}
$$|S^{(n)}|  = |S_{n+1}^{(n)}|+ \sum_{i=3}^n x_i^{(n)} = |S_{n+1}^{(n)}|+ \sum_{i=6}^n x_i^{(n)} + 4x_5^{(n)},$$
the first equality following from Definition \ref{def:xy}, and the second from Property \ref{property:xy} (X3) and (X4). In fact, the first equality is merely a statement of the fact that $D^{(n)}_i$ for $3\leq i\leq n$ together with $S^{(n)}_{n+1}$ %(cf. Definition \ref{def:xy})
 form a partition of $S^{(n)}$: where an internal reduction move first occurs, if it exists, and when no such move exists.

Property \ref{property:xy} (Sn) and Lemma \ref{lem:binom} now say that the above is just   
\begin{align*}
|S^{(n)}|  &=  x_{n+3}^{(n+3)} + \sum_{i=6}^n \sum_{j=0}^{n-i} \binom{n-i}{j} x_{n-3j}^{(n-3j)} + 4\sum_{j=0}^{n-5} \binom{n-5}{j} x_{n-3j}^{(n-3j)}.
\end{align*}
Interchanging order of summations in the second term, and taking out the case $j=n-5$ in the summation in the third term, we get 
\begin{align*}
|S^{(n)}|  &= x_{n+3}^{(n+3)} + \sum_{j=0}^{n-6} \left[\sum_{i=6}^{n-j} \binom{n-i}{j} + 4\binom{n-5}{j}\right] x_{n-3j}^{(n-3j)} + 4 x_{-2n+15}^{(-2n+15)}\\
&= x_{n+3}^{(n+3)}+\sum_{i=0}^{n-6} \left[\binom{n-5}{i+1}+4\binom{n-5}{i}\right] x_{n-3i}^{(n-3i)} + 4x_{-2n+15}^{(-2n+15)}.
\end{align*}
%\end{align*}
Finally, the probability is obtained on dividing by the total number of possible binary sequences of length $n$. 

\bigskip

{\it Part (2)}:  
  For the base case (i.) we have for any knot $K$ that $x^{(1)}_1=y^{(4)}_3=x^{(4)}_4$ by the $xy$ Lemma \ref{lem:XY} and Property \ref{property:xy} (Y3).

\medskip

For base case (ii.) where $K=U$, $m=1$, and $\ell=\ell_0=0$, we need to show $x^{(3)}_3=2$. From Definition \ref{def:xy},
\begin{equation}
x^{(3)}_3=|S^{(3)}_3\setminus S^{(3)}_4|=|S^{(3)}\setminus \{T^{(3)}_K,\not\exists\Int^{(3)}_1\}|=|\{T^{(3)}_K,\exists\Int^{(3)}_1\}|=|\{+++,---\}|=2.
\label{eq:x33}
\end{equation}

%\todo[inline]{I think I must have accidentally deleted this from the statement of the theorem above.}

Next take $m=1$ and $\ell=\ell_1=1$ for the unknot. We want to prove that $x^{(4)}_4=2.$ Again relying on Definition \ref{def:xy},
\begin{equation}
x^{(4)}_4=|S^{(4)}_4\setminus S^{(4)}_5|=|\{T^{(4)}_K,\exists\Int^{(4)}_2,\not\exists\Int^{(4)}_1\}|=|\{-+++,+---\}|=2.
\label{eq:x44}
\end{equation}

\medskip

Now for base case (iii.) we focus on the case $m=1$ for a general knot $K\neq U$ with general $\ell$. Here we have
$$x^{(3+\ell)}_{3+\ell}=|S^{(\ell)}_{1+\ell}|=|\{T^{(\ell)}_K,\not\exists\Int^{(\ell)}\}|=r(\ell),$$
where the first equality uses Property \ref{property:xy} (Sn), the second is from Definition \ref{def:xy}, and the last follows from the definition of $\ell$ and $r(\ell)$.

\medskip

In the final base case (iv.) we need to show that for any knot $K\neq U$, we have $x^{(i)}_i=0$ for $3\leq i\leq \ell$. Observe first that the cases $i=3$ and $i=4$ are immediate from Equation \ref{eq:x33} and Equation \ref{eq:x44}, respectively. For $5\leq i\leq \ell$, we use Property \ref{property:xy} (Sn) to note that $x^{(i)}_i=|S^{(i-3)}_{i-2}|=0$ from definition of the reduced lengths. 

\medskip

We move on to the second case in Equation \ref{eq:2} when $m\geq 2$ for $K=U$ and $\ell=\ell_0=0$ or where $K\neq U$ with a general $\ell$ $(>1)$. We consider $$x^{(3m+\ell)}_{3m+\ell}=|S^{(3(m-1)+\ell)}_{3(m-1)+\ell+1}|=|\{T^{(3(m-1)+\ell)}_K,\not\exists\Int^{(3(m-1)+\ell)}\}|,$$
where the first equality is by Property \ref{property:xy} (Sn) and the second follows from Definition \ref{def:xy}. We would like to show that this equals $4(m-1)r(\ell)$. 

Let the knot $K$ be written in reduced length $\ell$ as the word $w$. Also, let the words $++-$ and $-++$ be denoted by $e$ and $f$, respectively, and recall that $\bar{e}$ and $\bar{f}$ denote the negatives of these words. In this formulation, the cardinality $|S^{(3(m-1)+\ell)}_{3(m-1)+\ell+1}|$ we are after is equal to the number of ways in which to adjoin $e$ and $f$ to $w$ so as to end up with a word of length $3(m-1)+\ell$ with no internal reduction moves. Notice that all of $e\bar{e}$, $\bar{e}e$, $f\bar{f}$, and $\bar{f}f$ result in the introduction of $\Int$ moves and so cannot appear. Therefore, the length $3(m-1)+\ell$  word must be written as
$$e^{m_1}wf^{m_2},\bar{e}^{m_1}wf^{m_2},e^{m_1}w\bar{f}^{m_2},\text{ or }\bar{e}^{m_1}w\bar{f}^{m_2},$$
where $m_1,m_2\in\mathbb{Z}^{\geq0}$ such that $m_1+m_2=m-1$.

First suppose that $m_1$ is not $0$ or $m-1$. Then for each way to write $K$ in reduced length $\ell$, there is the choice between $e$ and $\bar{e}$ and the independent choice between $f$ and $\bar{f}$.  Thus the count here is $4(m-2)r(\ell)$.

Next suppose that $m_1$ is either $0$ or $m-1$. For each way of writing $K$ in reduced length $\ell$, in the first case, there is the choice between $f$ and $\bar{f}$, and in the second case, there is the choice between  $e$ and $\bar{e}$.  Therefore we need to add $4r(\ell)$ to the count.  Summing up, we have:  
$$x^{(3m+\ell)}_{3m+\ell}=4(m-2)r(\ell)+4r(\ell)=4(m-1)r(\ell).$$

\medskip

We now conclude with the first case in Equation \ref{eq:2} where $K=U, \ell=\ell_1=1$ but for $m\geq 2$. It is easy to see that the exact same argument as above yields the total count of cases as $4(m-1)r(\ell_1)=8(m-1)$ since $r(\ell_1)=2$ for $K=U$. However, a few cases are double counted, and $\Int$ moves are introduced in certain others. We pay attention to these at present.

In the formulation of the previous case, let $m_1$ be different from $0$ and $m-1$. The word $e^{m_1}wf^{m_2}=e^{m_1-1}(ewf)f^{m_2-1}$ has an $\Int$ move when $U$ is written as $w=-$, and similarly for the word $\bar{e}^{m_1}w\bar{f}^{m_2}$ when $U$ is written as $w=+$. Therefore, we have to subtract two for each of the $m-2$ cases. 

The last piece is the double counting.  When $U$ is written in reduced length as $w=-$, we see that $ew=wf$, and when $U$ is written as $w=+$, observe that $\bar{e}w=w\bar{f}$.  This demands that we subtract two for each of the $m-1$ cases when such a pair of words looks the same, resulting in $2(m-1)$ doubly counted scenarios.

Summing up, we get that
$$x^{(3m+1)}_{3m+1}=8(m-1)-2(m-2)-2(m-1)=4(m-1)+2.$$

\bigskip

{\it Part (3)}:  
%{\blue For a fixed knot $K$, there are two reduced lengths $\ell_0$ and $\ell_1$ with different remainders modulo 3, and so we may consider $x^{(i)}_{i}$ instead of $x^{(-3m+\ell)}_{-3m+\ell}$.}
We begin by writing Definition \ref{def:xy} of $y_i^{(n)}$ with all the terms on one side and using Lemma \ref{lem:binom}.

\begin{align*}
0 &= x_{i+1}^{(n)}-x_i^{(n)}+y_i^{(n)}\\
&= \sum_{j=0}^{n-i-1} \binom{n-i-1}{j}x_{n-3j}^{(n-3j)} - \sum_{j=0}^{n-i} \binom{n-i}{j}x_{n-3j}^{(n-3j)}+y_i^{(n)}\\
&= \sum_{j=0}^{n-i-1} \left[ \binom{n-i-1}{j} - \binom{n-i}{j}\right] x_{n-3j}^{(n-3j)} - x_{n-3(n-i)}^{(n-3(n-i))} +y_i^{(n)}\\
%\end{align*}
%\begin{align*}
&= \sum_{j=1}^{n-i-1} \left[ \binom{n-i-1}{j} - \binom{n-i}{j}\right] x_{n-3j}^{(n-3j)} - x_{n-3(n-i)}^{(n-3(n-i))} +y_i^{(n)}.\\
x_{3i -2n}^{(3i -2n)} &= \sum_{j=1}^{n-i-1} \left[ - \binom{n-i-1}{j-1} \right] x_{n-3j}^{(n-3j)} +y_i^{(n)}.
\end{align*}

Because of the $2n$ in the index on the left, we need only consider both parity cases for $i$; we choose for convenience the cases $i=3$ and $i=4$ based on Property \ref{property:xy} (Y3) and (Y4), respectively.

For the (easier) case $i=4$ we have $y_4^{(n)}=0$ and obtain:
\begin{equation*}
x_{12 -2n}^{(12 -2n)} = \sum_{j=1}^{n-5} \left[ - \binom{n-5}{j-1} \right] x_{n-3j}^{(n-3j)}.
%x_{2(6-n)}^{(2(6-n))} = \sum_{j=0}^{n-5} \left[ - \binom{n-5}{j-1} \right] x_{n-3j}^{(n-3j)}
\end{equation*}

For the remaining case with $i=3$ we have $y_3^{(n)}=x_4^{(n)}$.  With $\binom{k}{-1}$ for $k\geq 0$ set to 0, we use Lemma \ref{lem:binom} to obtain:
\begin{align*}
x_{9 -2n}^{(9 -2n)} &= \sum_{j=1}^{n-4} \left[ - \binom{n-4}{j-1} \right] x_{n-3j}^{(n-3j)} + x_4^{(n)}\\
&= \sum_{j=0}^{n-4} \left[ - \binom{n-4}{j-1} \right] x_{n-3j}^{(n-3j)} + \sum_{j=0}^{n-4} \binom{n-4}{j}x_{n-3j}^{(n-3j)}\\
&= \sum_{j=0}^{n-4} \left[ \binom{n-4}{j} - \binom{n-4}{j-1} \right] x_{n-3j}^{(n-3j)}.
%x_{2(4-n)+1}^{(2(4-n)+1)} &= \sum_{j=0}^{n-4} \left[ \binom{n-4}{j} - \binom{n-4}{j-1} \right] x_{n-3j}^{(n-3j)}
\end{align*}
\end{proof}

\section{Corollaries, Numerical Examples, a Conjecture, and an Open Question}
%\section{Numerical Examples}
\label{sec:data}

Our first corollary gives a stronger result than Theorem \ref{thm:noMoves}.
\begin{corollary}
\label{cor:noint}
For any knot $K$, the probability $P(T^{(n)}_K,\not\exists\Int^{(n)})\rightarrow 0$ as $n\rightarrow\infty$.
\end{corollary}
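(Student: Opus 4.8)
The plan is to bound the quantity $P(T^{(n)}_K, \not\exists \Int^{(n)})$, which counts (divided by $2^n$) the strings of length $n$ representing $K$ that admit no internal reduction move anywhere. By Property \ref{property:xy} (Sn) and Definition \ref{def:xy}, the number of such strings is exactly $|S^{(n)}_{n+1}| = x^{(n+3)}_{n+3}$, so the probability in question equals $x^{(n+3)}_{n+3}/2^n$. Thus the corollary reduces to showing that $x^{(n+3)}_{n+3}/2^n \to 0$ as $n\to\infty$, which is a statement purely about the growth rate of the diagonal terms $x^{(i)}_i$.

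First I would invoke Part (2) of the Main Theorem \ref{thm:main}, which gives these diagonal terms explicitly. By Lemma \ref{prop:lengths}, every knot $K$ of bridge index at most $2$ has exactly two reduced lengths $\ell_0 \equiv 0$ and $\ell_1 \equiv 1 \pmod 3$, and for each fixed $\ell$ the term $x^{(3m+\ell)}_{3m+\ell}$ is linear in $m$: it equals $4(m-1)r(\ell)$ (or $4(m-1)+2$ in the exceptional unknot case), with $r(\ell) \in \{2,4\}$. Writing $n+3 = 3m + \ell$, the index $m$ grows like $n/3$, so $x^{(n+3)}_{n+3}$ grows at most \emph{linearly} in $n$. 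Knots of bridge index three or greater contribute $x^{(n+3)}_{n+3} = 0$, since they never appear in any $T(3,b)$. Therefore in all cases $x^{(n+3)}_{n+3} = O(n)$.

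It then remains to combine these two observations:
\begin{equation*}
P(T^{(n)}_K, \not\exists \Int^{(n)}) = \frac{x^{(n+3)}_{n+3}}{2^n} \leq \frac{Cn}{2^n} \longrightarrow 0 \quad\text{as } n\to\infty,
\end{equation*}
for a constant $C$ depending only on $K$ (through $r(\ell_0)$ and $r(\ell_1)$). Since a linear numerator is overwhelmed by the exponential denominator $2^n$, the probability tends to zero, which is precisely the assertion and is visibly stronger than Theorem \ref{thm:noMoves}, as that earlier result only handled the two specific lengths $\ell_0, \ell_1$ and forbade external moves as well.

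I do not expect a serious obstacle here, since the heavy lifting is already done by Part (2) of Main Theorem \ref{thm:main}; the only point requiring care is the bookkeeping that correctly identifies $P(T^{(n)}_K, \not\exists\Int^{(n)})$ with $x^{(n+3)}_{n+3}/2^n$ via (Sn), and the matching of $n+3$ to the correct residue class $3m+\ell$ so that one selects whichever of $\ell_0$ or $\ell_1$ makes $n+3 \equiv \ell \pmod 3$. One should also note that for those $n$ with $n+3$ in neither residue class of $K$ (i.e.\ $n+3 \equiv 2 \pmod 3$), the term vanishes outright, so the bound holds trivially there. The genuinely substantive content is the \emph{linear} growth of the diagonal terms, and that is exactly what establishes decay against the exponential.
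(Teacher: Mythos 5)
Your proposal is correct and takes essentially the same route as the paper's own proof: you identify $P(T^{(n)}_K,\not\exists\Int^{(n)})$ with $x^{(n+3)}_{n+3}/2^n$ via Property \ref{property:xy} (Sn) and Definition \ref{def:xy}, then play the linear growth of the diagonal terms from Main Theorem \ref{thm:main} (2) against the exponential denominator $2^n$, exactly as the paper does. Your additional bookkeeping (bridge index at least three giving zero, and the vanishing residue class $n+3\equiv 2 \pmod 3$) is consistent with the paper and merely makes explicit what it leaves implicit.
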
 
\begin{proof}
Consider the expressions for the $x^{(n)}_{n}$ with $n=3m+\ell$ given in part two of Theorem \ref{thm:main}. From Property \ref{property:xy} (Sn), we have $x^{(n)}_n=|S^{(n-3)}_{n-2}|$ for $n\geq 4$, but we know that this is the cardinality of the set $\{T^{(n)}_K,\not\exists\Int^{(n)}\}$ from Definition \ref{def:xy}. Since the result in Theorem \ref{thm:main} (2) tells us that these only increase linearly with respect to $n$, the result of the corollary is straightforward since the denominator in the probability expression increases exponentially in $n$.
\end{proof}

We present the following more interesting corollary for knots $K\neq U$.  The proof follows from the fact that Main Theorem \ref{thm:main} (1) and (3) are independent of the knot while (2) depends not on the knot itself but only on the reduced length $\ell$ and the number $r(\ell)$.

\begin{corollary}
\label{cor:coincide}
The probabilities $P(T^{(n)}_{K_1})$ and $P(T^{(n)}_{K_2})$ for two knots $K_1$ and $K_2$ are identical if their reduced lengths and corresponding $r(\ell)$ are the same.
\end{corollary}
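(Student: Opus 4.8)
The plan is to read the corollary straight off the three parts of Main Theorem \ref{thm:main}, the operative observation being that a knot enters the probability only through a short list of numerical invariants. First I would invoke Main Theorem \ref{thm:main} (1): for each fixed $n$, Equation \ref{eq:maineq} expresses $P(T^{(n)}_K)$ as a $\frac{1}{2^n}$-scaled linear combination of the diagonal quantities $x_m^{(m)}$ whose coefficients are binomial numbers depending on $n$ alone and not on $K$. Hence it suffices to prove the single claim that every diagonal value $x_m^{(m)}$ appearing there is determined by $K$ solely through the data $(\ell_0,\ell_1,r(\ell_0),r(\ell_1))$; two knots agreeing on this data then force agreement of all the $x_m^{(m)}$, and therefore of the probability, for every $n$.

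For the positive indices the claim is immediate from Main Theorem \ref{thm:main} (2): inspecting Equation \ref{eq:2} together with its base cases, each $x_{3m+\ell}^{(3m+\ell)}$ is an explicit function of $\ell$, $r(\ell)$, and $m$ only, the unknot appearing merely as the distinguished case $\ell_0=0$ (so that sharing its invariants forces the other knot to be the unknot as well). The substance of the argument is therefore confined to the nonpositive indices, which are supplied by Main Theorem \ref{thm:main} (3). There each such $x_m^{(m)}$ is written as a combination of other diagonals $x_{m'}^{(m')}$ with coefficients that are again pure binomials, independent of $K$.

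The step I expect to be the main obstacle is verifying that this last reduction actually terminates in the positive range already governed by part (2), rather than circling among nonpositive indices. I would dispose of it with a one-line index count: in the even formula the left index $2(6-n)$ is compared with a right-hand index $n-3j$ for $1\le j\le n-5$, and $n-3j>2(6-n)$ reduces to $j<n-4$, which holds throughout the range; the odd formula, with left index $2(4-n)+1$ and right-hand indices $n-3j$ for $0\le j\le n-4$, reduces identically to $j<n-3$. Thus every index occurring on the right is \emph{strictly larger} than the one on the left, so a descending induction on $m$ (with base case $m>0$ handled by part (2)) is well founded and shows each nonpositive $x_m^{(m)}$ to be a knot-independent linear combination of positive diagonals, hence again a function of $(\ell_0,\ell_1,r)$ alone.

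With the claim in hand the corollary is formal: if $K_1$ and $K_2$ have the same reduced lengths and the same $r(\ell)$, then the claim makes every $x_m^{(m)}$ entering Equation \ref{eq:maineq} take a common value for the two knots, and the knot-independent binomial coefficients of part (1) then yield the identity $P(T^{(n)}_{K_1})=P(T^{(n)}_{K_2})$ for all $n$.
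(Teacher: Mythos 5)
Your proposal is correct and takes essentially the same route as the paper, whose entire proof is the single observation that parts (1) and (3) of Main Theorem \ref{thm:main} are independent of the knot while part (2) depends only on the reduced length $\ell$ and the number $r(\ell)$. Your extra verifications---that the part-(3) recursion is well founded because every right-hand index $n-3j$ strictly exceeds the left-hand index, and that the unknot's special formula causes no ambiguity since $\ell_0=0$ characterizes $U$---merely make explicit details the paper leaves implicit.
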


Following Corollary \ref{cor:coincide}, we observe  that the probabilities are identical in each of these classes:
\begin{itemize}
	\item the trefoil and figure eight, having reduced length $\ell_1=4$, for any length $n=3i+1$,
	\item the figure eight, $5_1$ and $6_3$, having reduced length $\ell_0=6$, for any length $n=3i$,
	\item $5_2$, $6_1$, and $6_2$, having reduced length $\ell_1=7$, for any length $n=3i+1$, and
	\item $6_1$ and $6_2$, having reduced lengths $\ell_1=7$ and $\ell_0=9$, for any length.
\end{itemize}

We can compute $x^{(i)}_i$ with $i\leq 0$ recursively from Main Theorem \ref{thm:main} (3) despite not having a closed formula.

\begin{figure}[ht]
\begin{center}$
\begin{array}{ccc}
{\includegraphics[width=2in]{unknot.pdf}} &
{\includegraphics[width=2in]{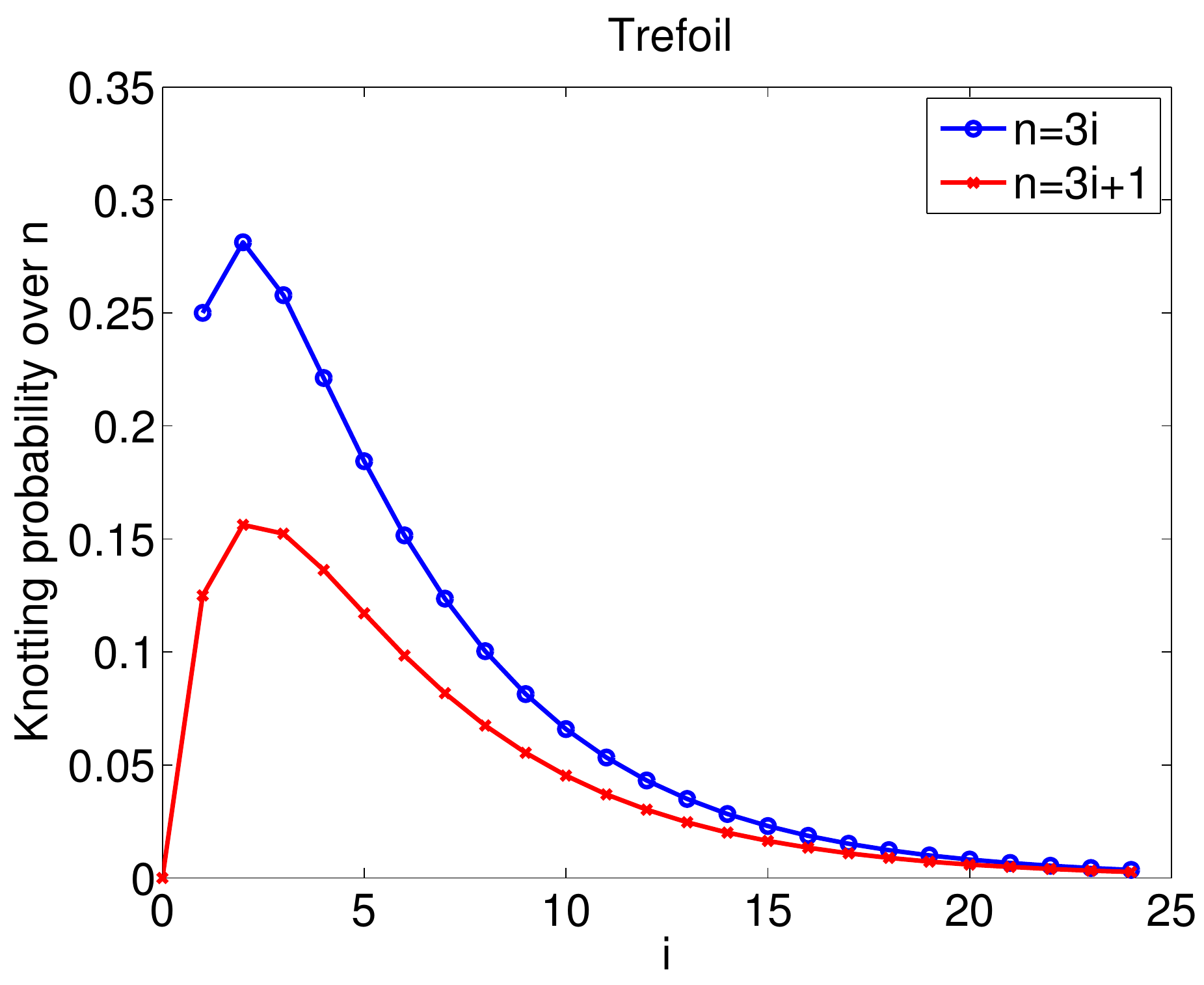}} &
{\includegraphics[width=2in]{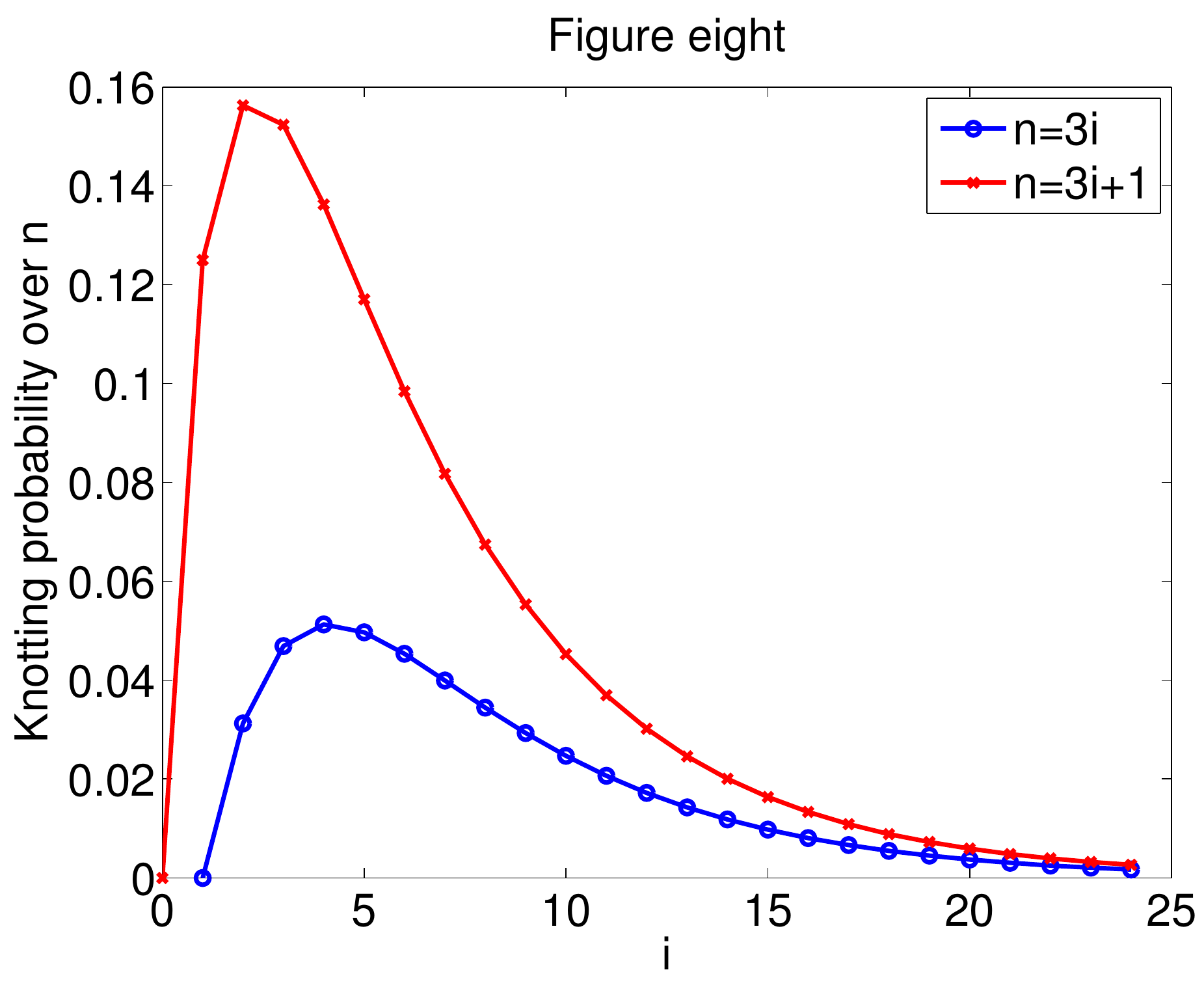}} 
\end{array}$
\end{center}
\begin{center}$
\begin{array}{cc}
{\includegraphics[width=2in]{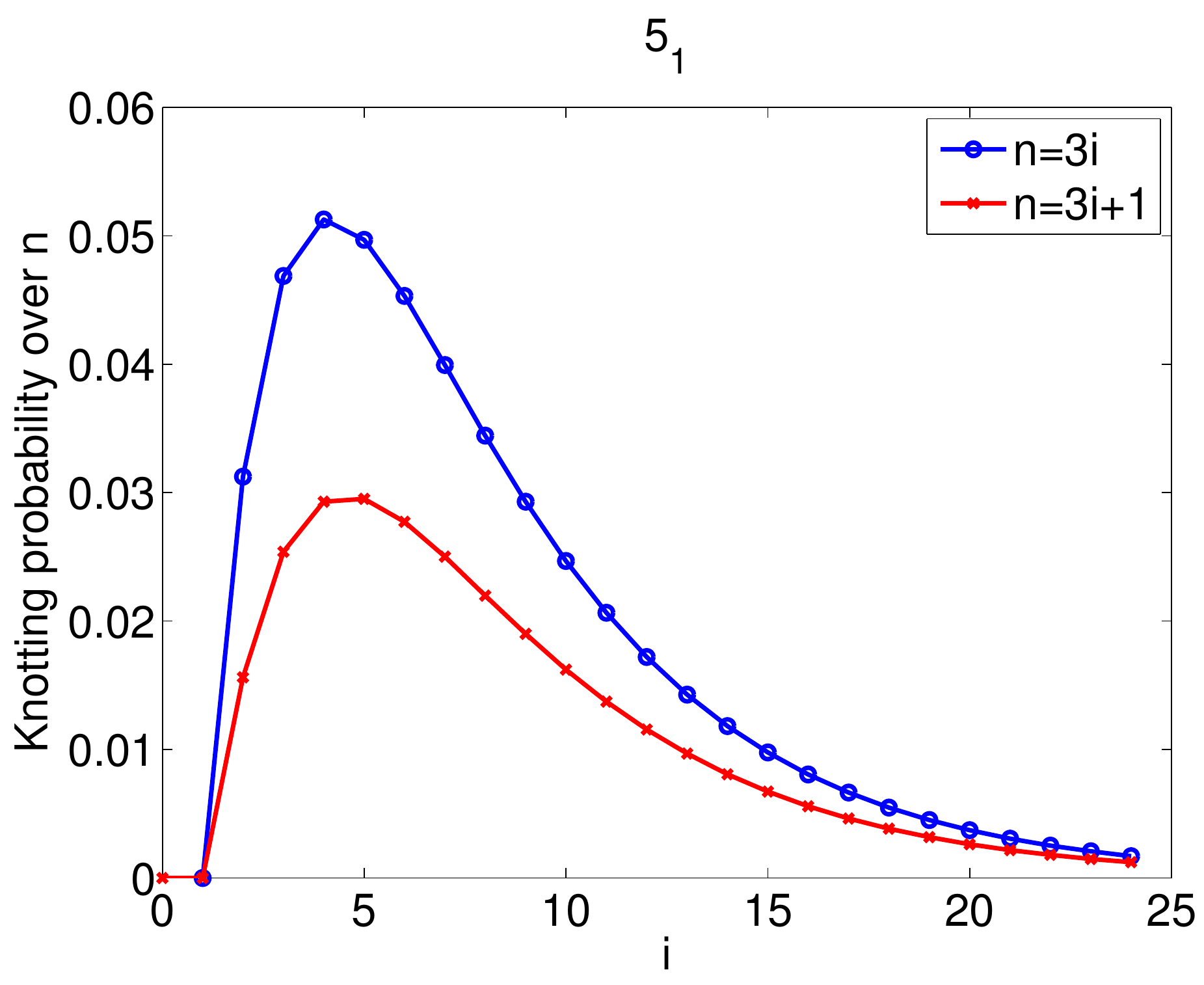}} &
{\includegraphics[width=2in]{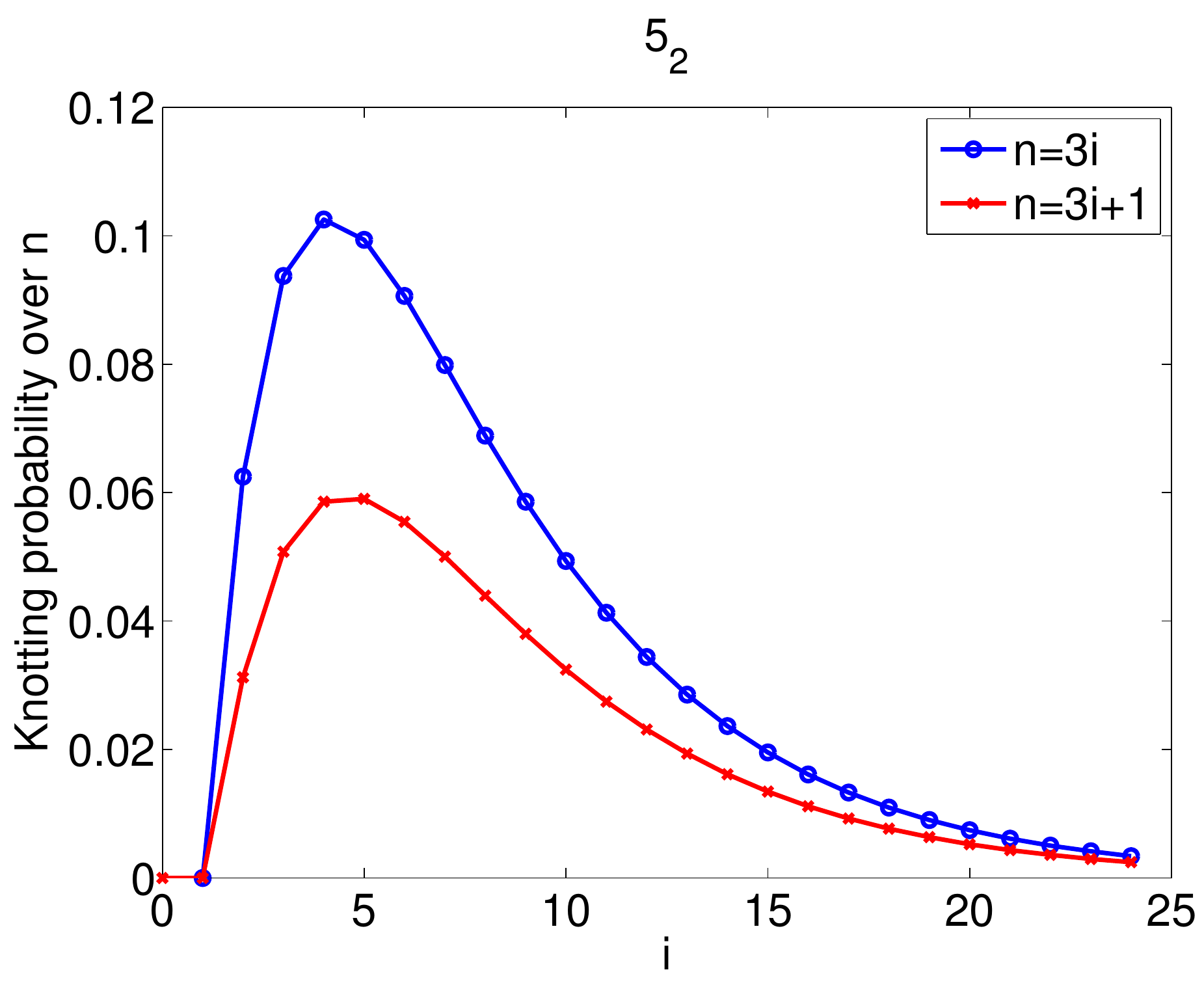}} 
\end{array}$
\end{center}
\begin{center}$
\begin{array}{cc}
{\includegraphics[width=2in]{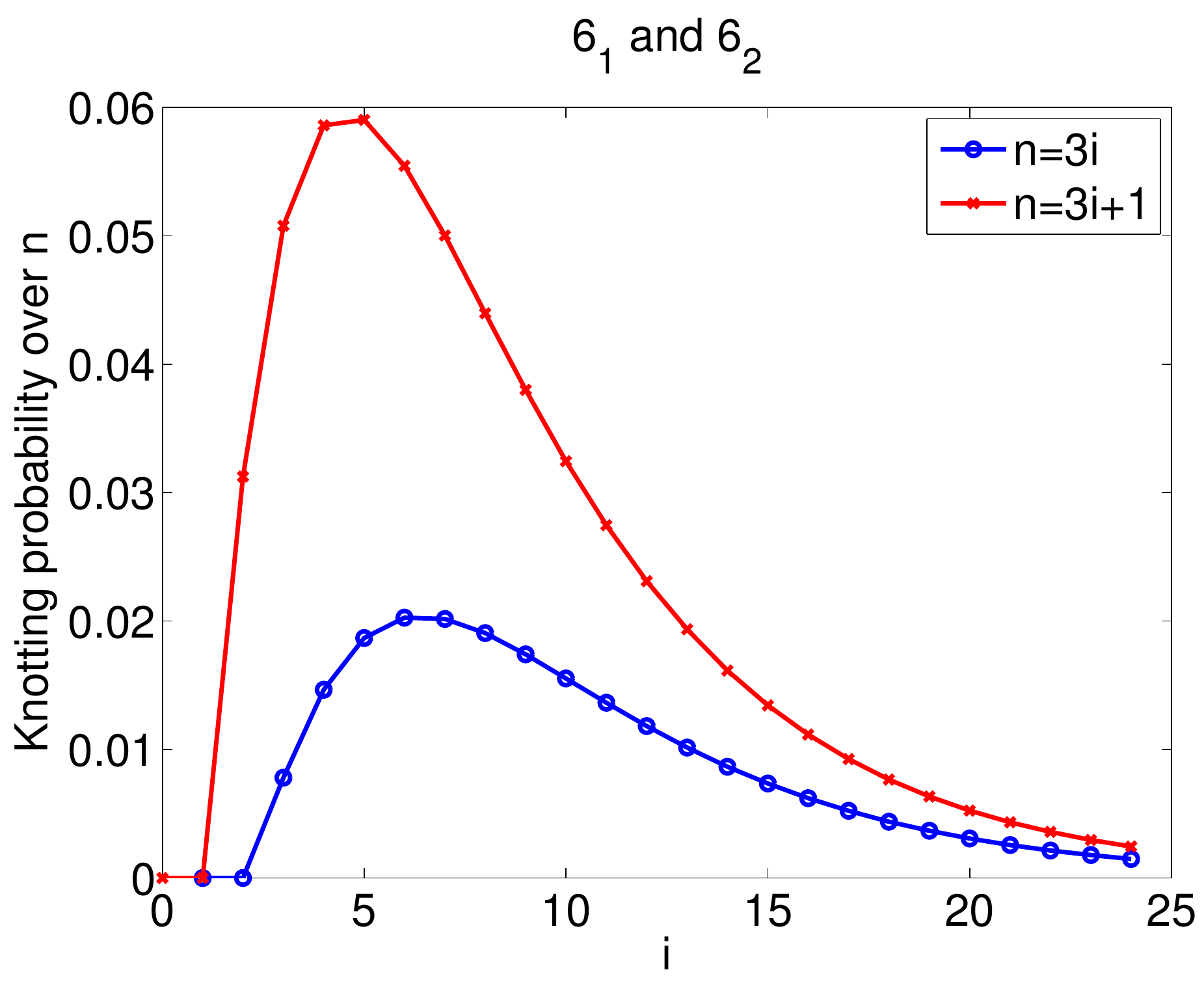}} &
{\includegraphics[width=2in]{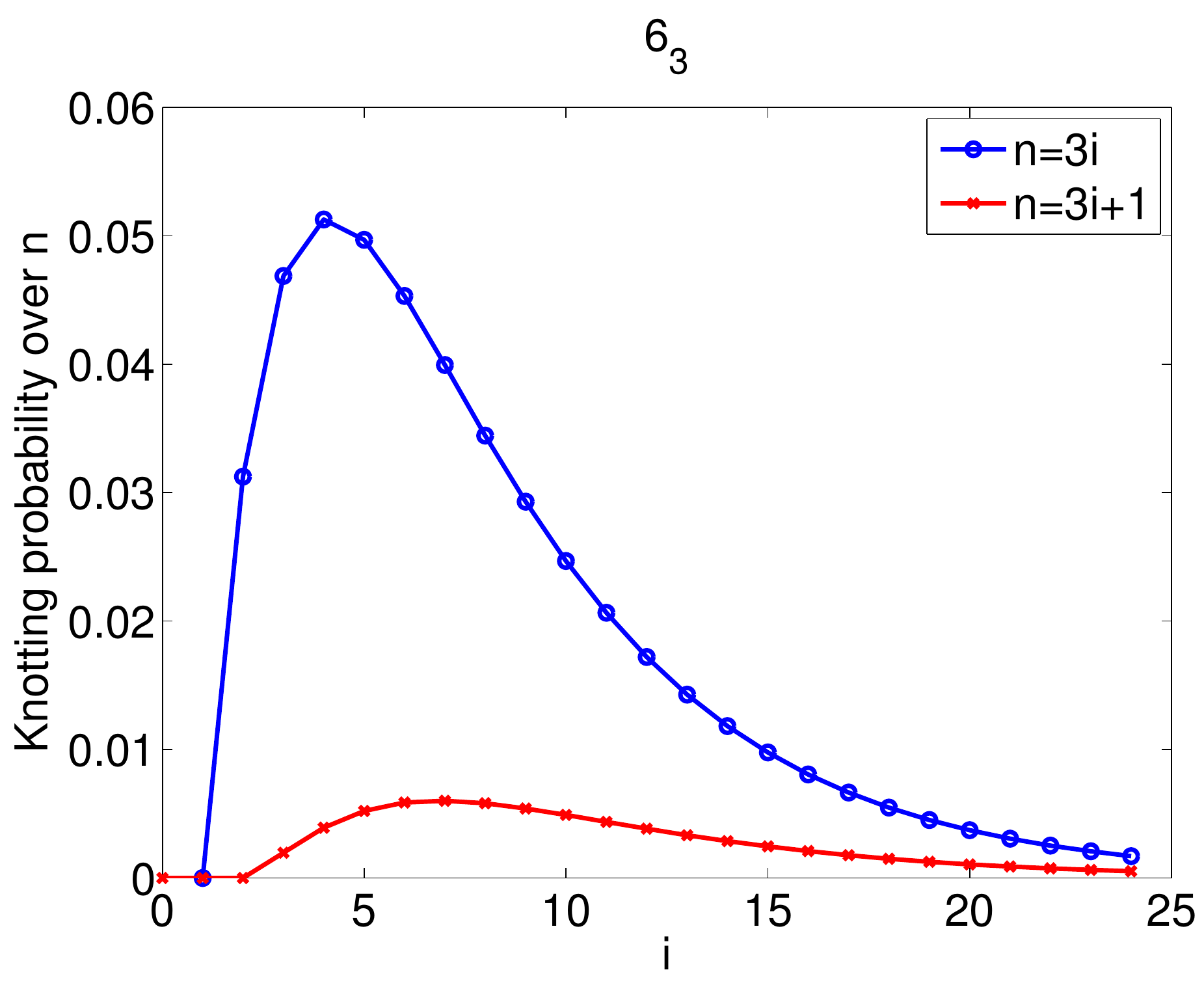}} 
\end{array}$
\end{center}
\caption{Knotting probabilities of the smallest knots (the unknot $U$, the trefoil $3_1$, the figure eight $4_1$, $5_1$, $5_2$, $6_1$, $6_2$, and $6_3$) appearing in $T(3,n+1)$ obtained from MATLAB \cite{MATLAB} using the formulas given in Main Theorem \ref{thm:main}.}
\label{fig:knotprob}
\end{figure}

\begin{computation}
\label{cor:data}
We give in Figure \ref{fig:knotprob} the probabilities for the unknot, trefoil, figure eight, $5_1$, $5_2$, $6_1$, $6_2$ and $6_3$, with $n=3i$ and $3i+1$ for $0\leq i\leq 24$, computed using Equation \ref{eq:maineq} from Main Theorem \ref{thm:main} via MATLAB \cite{MATLAB}. 
Note that $i$ cannot be zero when $n=3i$.
%except for the unknot
\end{computation}

Notice that the results depicted in Figure \ref{fig:knotprob} follow Corollary \ref{cor:coincide}.

We also observe that the probabilities are identical for $n=3i$ and $3i+1$ cases for the unknot for those $i$ shown in Figure \ref{fig:knotprob}.

We conclude with a conjecture and an open question based on Computation \ref{cor:data} despite not having a closed formula for $P(T^{(n)}_K)$ from Main Theorem \ref{thm:main}.

\begin{conjecture}
\label{conj:0}
The probability $P(T^{(n)}_K)$ of obtaining a knot $K$ in the model $T(3,n+1)$ goes to 0 as $n$ approaches infinity.
\end{conjecture}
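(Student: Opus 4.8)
The plan is to recast the conjecture as a cogrowth statement for a non-amenable group and then invoke Kesten's theorem. Write $|S^{(n)}| = x_1^{(n)}$ for the number of sign strings of length $n$ representing $K$, so that $P(T_K^{(n)}) = |S^{(n)}|/2^n$ by Property \ref{property:xy} (X1); the goal is to show $|S^{(n)}|/2^n \to 0$. The starting observation is that the internal reduction move of Definition \ref{def:int} deletes a block $+++$ or $---$, i.e.\ it is exactly the relation $a^3 = b^3 = 1$ in the free product $G := \langle a, b \mid a^3, b^3\rangle \cong \mathbb{Z}_3 * \mathbb{Z}_3$, where we read $+$ as $a$ and $-$ as $b$. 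Thus a string $w \in \{+,-\}^n$, regarded as a positive word in $a,b$, has a well-defined normal form $\bar w \in G$ (one checks that iterated deletion of $+++$/$---$, including the merging that occurs when a run of length divisible by $3$ vanishes, reproduces the free-product normal form). After also accounting for the external moves of Definition \ref{def:ext} and the identifications recorded in $\mathcal{F}(K)$ from Equation \ref{set:S}, the string represents $K$ precisely when $\bar w$ lies in a fixed finite target set $T_K \subset G$ of reduced words of length $\ell_0$ or $\ell_1$, as furnished by Lemma \ref{prop:lengths} and Theorem \ref{thm:1regular}.

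First I would make this dictionary precise: show that the knot type of the billiard diagram depends only on the $G$-normal form of $w$ together with its behaviour at the two ends, and that ``no internal move'' and ``no external move'' translate respectively into ``$\bar w$ is reduced in the interior'' and into the two boundary conditions (ii) and (iii) of Definition \ref{def:1regular}. This identifies $|S^{(n)}|$, up to a bounded factor coming from the at most four words in $T_K$, with the number of length-$n$ positive words in $\{a,b\}$ whose normal form is a prescribed element $g \in G$.

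Second, I would introduce the Markov operator $P = \tfrac12(R_a + R_b)$ acting on $\ell^2(G)$ by right translation, so that $\#\{\,w \in \{a,b\}^n : \bar w = g\,\} = 2^n \langle P^n \delta_e, \delta_g\rangle$. The free product $\mathbb{Z}_3 * \mathbb{Z}_3$ is virtually free and non-amenable (the only amenable nontrivial free product is $\mathbb{Z}_2 * \mathbb{Z}_2$), and by Kesten's criterion applied to the symmetric walk $P^*P$, whose norm controls $\|P\|$ via $\|P\|^2 = \|P^*P\|$, one obtains the strict bound $\|P\| < 1$. By Cauchy--Schwarz and submultiplicativity $\langle P^n \delta_e, \delta_g\rangle \le \|P\|^n$, so summing over the finitely many $g \in T_K$ gives $|S^{(n)}| \le C\,(2\|P\|)^n$ with $2\|P\| < 2$; dividing by $2^n$ yields $P(T_K^{(n)}) \le C\|P\|^n \to 0$. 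This would in fact establish exponential decay, giving a concrete candidate rate $\|P\|$ for the open Question \ref{ques:0rate}.

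The main obstacle I anticipate is the first step. The external reduction moves and the $\mathcal{F}(K)$-identifications act on the two ends of the word and correspond to the cyclic and coset symmetries of the continued fraction (mirror, inverse, and the Palindrome Theorem \ref{thm:palindrome}), so ``represents $K$'' is really a condition on a double coset or a cyclically reduced form rather than on a single group element $g$. Making $T_K$ genuinely finite, and verifying that these boundary conditions inflate the count by at most a bounded factor, is where the knot theory genuinely enters; once that bookkeeping is settled, the non-amenability of $\mathbb{Z}_3 * \mathbb{Z}_3$ does all of the analytic work. A secondary technical point is that the step operator $P$ uses only the positive generators and is not self-adjoint, so the non-amenability estimate must be routed through $P^*P$ rather than applied to $P$ directly.
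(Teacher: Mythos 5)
First, a point of comparison with the paper: the statement you are proving is Conjecture~\ref{conj:0}, which the paper leaves \emph{open} --- it proves only the much weaker Corollary~\ref{cor:noint} (decay of $P(T^{(n)}_K,\not\exists\Int^{(n)})$, using the linear growth of the no-internal-move counts from Main Theorem~\ref{thm:main}~(2)) and supports the full conjecture with numerics. So there is no proof in the paper to match yours against, and your argument must stand on its own; it does not, for two reasons, and the decisive one is exactly the ``secondary technical point'' you flagged. Kesten's criterion for the symmetrized operator is sensitive to the subgroup generated by the support of the symmetrized measure, not to non-amenability of the ambient group. Here $P=\tfrac12(R_a+R_b)$ gives $P^*P$ equal to right convolution by $\tfrac12\delta_e+\tfrac14\delta_{a^{-1}b}+\tfrac14\delta_{b^{-1}a}$, which is supported in the subgroup $\langle a^{-1}b\rangle=\langle a^2b\rangle\cong\mathbb{Z}$ --- infinite cyclic, hence amenable. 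Decomposing $\ell^2(G)$ over the left cosets of this subgroup, each invariant under $P^*P$, the norm of $P^*P$ equals the norm of $\tfrac12 I+\tfrac14(S+S^{-1})$ on $\ell^2(\mathbb{Z})$, which is $1$. Hence $\|P\|=1$ exactly, despite $\mathbb{Z}_3*\mathbb{Z}_3$ being non-amenable, and the bound $\langle P^n\delta_e,\delta_g\rangle\le\|P\|^n$ yields nothing; no exponential decay (indeed no decay at all) follows from this route.

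The second gap is that your target set $T_K$ cannot be made finite, and the ``bounded inflation'' claim for the boundary moves is false. The word $(++-)^k$ reduces to the empty word by repeated external moves (Definition~\ref{def:ext}), so it represents the unknot for every $k$, yet its images $(a^2b)^k$ are pairwise distinct elements of $G$; more generally the words $e^{m_1}wf^{m_2}$ in the paper's proof of Main Theorem~\ref{thm:main}~(2), with $e=++-\mapsto a^2b$ and $f=-++\mapsto ba^2$, show that each knot is represented by a finite union of \emph{infinite} double cosets $\langle a^2b\rangle\, g\,\langle ba^2\rangle$, with the number of length-$n$ representatives already growing linearly in $n$. Worse, your two gaps are one phenomenon: the almost-invariant directions of $P^*P$ run along cosets of $\langle a^{-1}b\rangle=\langle a^2b\rangle$, which is precisely the direction swept by each knot's double coset, so no operator-norm argument can separate the mass you need to kill from the mass $P$ fails to contract. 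What does survive, and is genuinely worthwhile, is your dictionary: internal moves generate exactly the monoid congruence $a^3=b^3=1$ (whose quotient monoid is already the group, since $a^{-1}=a^2$ there), so by Propositions~\ref{prop:int} and~\ref{prop:ext} the knot type is a well-defined function on $\langle a^2b\rangle\backslash G/\langle ba^2\rangle$ and $P(T^{(n)}_K)=\mu^{*n}(A_K)$ for a finite union of double cosets $A_K$. Proving Conjecture~\ref{conj:0} then amounts to showing the positive-generator walk escapes every such double coset --- a statement that would also address Question~\ref{ques:0rate} --- but this needs finer tools (local limit theory on free products, or generating-function/automatic-structure counting in the spirit of Main Theorem~\ref{thm:main}), not Kesten's theorem.
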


\begin{question}
\label{ques:0rate}
At what rate does this probability $P(T^{(n)}_K)$ go to 0?
\end{question}

\newcommand{\etalchar}[1]{$^{#1}$}
\def\cprime{$'$}
\providecommand{\bysame}{\leavevmode\hbox to3em{\hrulefill}\thinspace}
\providecommand{\MR}{\relax\ifhmode\unskip\space\fi MR }
% \MRhref is called by the amsart/book/proc definition of \MR.
\providecommand{\MRhref}[2]{%
  \href{http://www.ams.org/mathscinet-getitem?mr=#1}{#2}
}
\providecommand{\href}[2]{#2}

\end{document}